\newcommand{\Ric}{\textnormal{Ric}}
\newcommand{\vol}{\textnormal{vol}}
\newcounter{def}
\newtheorem{thm}{Theorem}[section]
\theoremstyle{definition}
\newtheorem{conj}{Conjecture}
\theoremstyle{remark}
\theoremstyle{definition}
\newtheorem{definition}[def]{Definition}
\newtheorem*{problem*}{Problem}
\newcommand{\norm}[1]{\left\lVert#1\right\rVert}
\begin{document}	
	\title{Proof of Bishop's Volume Comparison Theorem Using Singular Soap Bubbles}
	\author{H. BRAY, F. GUI, Z. LIU, AND Y. ZHANG}
	\begin{abstract}

		Bishop's volume comparison theorem states that a compact $n$-manifold with Ricci curvature larger than the standard $n$-sphere has less volume. While the traditional proof uses geodesic balls, we present another proof using isoperimetric hypersurfaces, also known as ``soap bubbles,'' which minimize area for a given volume. Curiously, isoperimetric hypersurfaces can have codimension 7 singularities, an interesting challenge we are forced to overcome.
	\end{abstract}
	\maketitle
	
	\section{Introduction}
	The following Bishop's theorem is a classic volume comparison theorem in Riemannian geometry.
	\begin{thm}[Bishop's theorem] Let $(S^n,g_0)$ be an n-sphere with standard metric $g_0$ and Ricci curvature $\Ric_0\cdot g_0$. Let $(M,g)$ be a compact connected smooth Riemannian manifold of dimension $n\ge 3$ without boundary. Let $\Ric(g)$ and $\textnormal{vol}(M)$ denote the Ricci curvature and the volume of $M$, respectively.
		If $\Ric(g)\ge \Ric_0\cdot g_0$, then we have ${\vol}(M)\le ${\vol}$(S^n)$.
		\begin{figure}[H]
			\centering
			\includegraphics[width=0.7\linewidth]{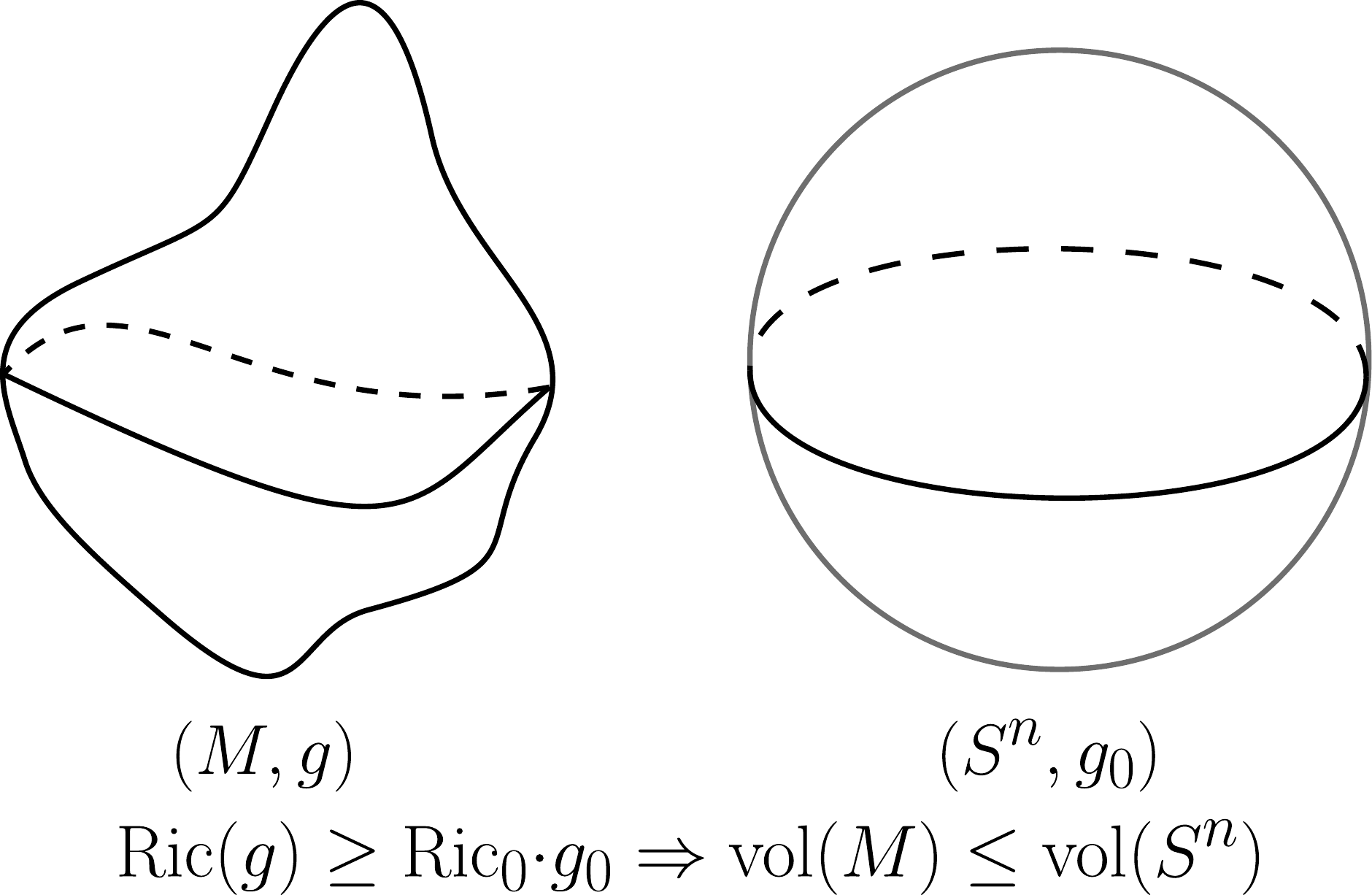}
			\caption{Bishop's Thoerem}
			\label{fig:figure-1-bishop-thoerem}
		\end{figure}

	\end{thm}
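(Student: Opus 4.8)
The plan is to replace the geodesic balls of the classical proof by volume-constrained perimeter minimizers (``soap bubbles'') and to compare the resulting isoperimetric profile of $(M,g)$ with the explicit profile of the round sphere. Normalize so that $\Ric_0=n-1$. For $v\in(0,\vol(M))$ choose a set $\Omega_v\subset M$ with $\vol(\Omega_v)=v$ minimizing perimeter among such sets, and set $I(v):=\operatorname{Per}(\Omega_v)$. Existence follows from the direct method in $BV$, and by the regularity theory for perimeter minimizers with a volume constraint the boundary $\Sigma_v:=\partial\Omega_v$ is a smooth constant-mean-curvature hypersurface outside a closed singular set of dimension at most $n-8$, i.e.\ of codimension at least $7$ inside $\Sigma_v$. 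The profile $I$ is continuous on $[0,\vol(M)]$, positive on the interior with $I(0)=I(\vol(M))=0$, locally Lipschitz inside, and symmetric about $\vol(M)/2$ because $\operatorname{Per}(\Omega)=\operatorname{Per}(M\setminus\Omega)$.

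First I would derive the governing differential inequality. The first variation of perimeter gives $I'(v)=H_v$, the constant mean curvature of $\Sigma_v$ (for a.e.\ $v$). For the second derivative, deform $\Omega_v$ at constant normal speed, reparametrize by enclosed volume, and invoke the stability of $\Omega_v$ under volume-preserving deformations; this yields, in the barrier sense,
\[ I(v)\,I''(v)\ \le\ -\frac{1}{I(v)}\int_{\Sigma_v}\bigl(|\mathrm{II}|^2+\Ric(\nu,\nu)\bigr), \]
where $\nu$ is the unit normal and $\mathrm{II}$ the second fundamental form of $\Sigma_v$. Substituting $\Ric(\nu,\nu)\ge n-1$ and the Cauchy--Schwarz estimate $|\mathrm{II}|^2\ge H_v^2/(n-1)$ (valid since $\dim\Sigma_v=n-1$), and using $H_v=I'(v)$, gives the sharp inequality
\[ I(v)\,I''(v)\ \le\ -(n-1)-\frac{I'(v)^2}{n-1}, \]
which the profile of $(S^n,g_0)$ satisfies with equality. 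In particular $I''<0$, so $I$ is concave and, by the symmetry of $I$, $I'\ge 0$ on $(0,\vol(M)/2)$.

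The main obstacle will be making the second-variation step legitimate when $n\ge 8$, since then $\Sigma_v$ genuinely carries a singular set and the Jacobi quadratic form has to be integrated over it. I would handle this with the logarithmic-cutoff technique familiar from the regularity theory of stable minimal hypersurfaces: choose functions $\eta_\varepsilon$ vanishing near the singular set, equal to $1$ outside an $\varepsilon$-neighbourhood of it, with $\int_{\Sigma_v}|\nabla\eta_\varepsilon|^2\to 0$ as $\varepsilon\to 0$ --- this is exactly where ``codimension $\ge 7>2$'' is used --- then test stability with the normal speeds $u_\varepsilon=c_\varepsilon\eta_\varepsilon$ normalized so that $\int_{\Sigma_v}u_\varepsilon=1$, and pass to the limit; monotone convergence also yields $\int_{\Sigma_v}|\mathrm{II}|^2<\infty$, so nothing survives from near the singularities. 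Checking the admissibility of the competitor deformations near the singular set, and the weak twice-differentiability of $I$ that they produce, is where I expect the real work to lie.

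Finally I would integrate the sharp inequality by means of a substitution that plays the role of the geodesic radius. On $(0,\vol(M)/2)$ set $\theta(v):=\operatorname{arccot}\!\bigl(I'(v)/(n-1)\bigr)\in(0,\tfrac{\pi}{2}]$, which is nondecreasing because $I'$ is nonincreasing; a short computation from the sharp inequality gives $\theta'(v)\ge 1/I(v)$. With $\Phi(v):=I(v)/\sin^{n-1}\!\theta(v)$ one finds
\[ \frac{\Phi'(v)}{\Phi(v)}=\frac{I'(v)}{I(v)}-(n-1)\cot\theta(v)\,\theta'(v)=I'(v)\Bigl(\frac{1}{I(v)}-\theta'(v)\Bigr)\le 0, \]
using $I'\ge 0$ and $\theta'\ge 1/I$. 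Since small isoperimetric regions are asymptotically round Euclidean balls, $I(v)\sim n\,\omega_n^{1/n}\,v^{(n-1)/n}$ as $v\to 0^+$ (with $\omega_n$ the volume of the Euclidean unit $n$-ball), which forces $\Phi(v)\to n\,\omega_n=\vol(S^{n-1})=:\omega_{n-1}$; hence $I(v)\le\omega_{n-1}\sin^{n-1}\!\theta(v)$ on $(0,\vol(M)/2)$, so $dv\le I\,d\theta\le\omega_{n-1}\sin^{n-1}\!\theta\,d\theta$ there. Changing variables and using $\theta\le\tfrac{\pi}{2}$,
\[ \frac{\vol(M)}{2}=\int_0^{\vol(M)/2}dv\ \le\ \int_0^{\pi/2}\omega_{n-1}\sin^{n-1}\!\theta\,d\theta\ =\ \frac{\vol(S^n)}{2}, \]
which is the asserted bound $\vol(M)\le\vol(S^n)$. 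Every integration here localizes away from the singular set once the second-variation step is in place, so the codimension-$7$ singularities do not obstruct the conclusion.
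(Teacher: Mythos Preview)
Your overall strategy matches the paper's: derive the inequality $I\,I'' \le -(n-1)-(I')^2/(n-1)$ for the isoperimetric profile from the first and second variation of a minimizer, absorb the codimension-$7$ singular set with a cutoff, and then integrate the resulting ODE to bound $\vol(M)$. Two points differ. First, your cutoff step tacitly relies on a uniform density bound $\mathcal{H}^{n-1}(\Sigma_v\cap B_\rho(\xi))\le C\rho^{n-1}$; the ``codimension $\ge 7$'' of the singular set by itself does not force $\int_{\Sigma_v}|\nabla\eta_\varepsilon|^2\to 0$, since that integral lives on the possibly singular hypersurface $\Sigma_v$ rather than on a smooth ambient manifold. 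The paper isolates this density bound as its Lemma~3.2 and proves it by isometrically embedding $M$ in Euclidean space, bounding the ambient mean curvature of $\Sigma_v$, and applying the monotonicity formula for varifolds with bounded generalized mean curvature; you will need this ingredient (or an equivalent) to close the step you flag as ``where the real work lies.'' Second, your ODE integration is genuinely different: the paper passes to $F=I^{\,n/(n-1)}$, defines a ``Ricci mass'' $m(V)$ out of $F$ and $F'$, shows $m'\ge 0$ with $m(0)=0$, and finishes by a phase-plane argument in the $(F,F')$-plane, whereas your substitution $\theta=\operatorname{arccot}\bigl(I'/(n-1)\bigr)$ directly recovers the polar angle on the model sphere and gives the bound in one line. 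Your route is cleaner for Bishop's theorem; the paper's mass function, on the other hand, is the device that adapts to the scalar-curvature comparison (the ``Football Theorem'') treated in its final section.
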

	This theorem was first proven by Bishop in 1963 \cite{bishop}.
	A standard proof using geodesic balls  can be found in \cite{petersen2016}.
	
	In this paper, we follow the ideas developed  in the first author's thesis \cite{bray2009penrose} and give a new proof of this well-known result using isoperimetric hypersurfaces as defined below and geometric measure theory.
	\begin{definition} 
		Let $\Sigma=\partial W\subset M$ be a compact hypersurface in $M$.  $\Sigma$ is called an isoperimetric surface of $M$, if $\Sigma$ is the area minimizer among all the hypersurfaces bounding the same volume $V$. We can also call $\Sigma$ a soap bubble.
	\end{definition}
	According to the great review by Antonio Ros in \cite{anros}, the existence of an isoperimetric surface $\Sigma$ for any given $V\in (0,\textnormal{vol}(M))$ in a compact manifold $M$ can be found in the monograph \cite{almgren1976existence} by Almgren.
	
	How does our proof work?	Roughly speaking, we turn the Ricci curvature bound into a second order ordinary differential inequality about the area function of isoperimetric hypersurfaces with volume parameter. Since the entire manifold itself can be realized as the inside of an isoperimetric hypersurface with zero area bounding the largest volume, the inequality we obtained from the Ricci curvature bound can be utilized to get an upper bound for the volume. In Section 2, we present a detailed exposition of this idea. 
	
	However, one technicality of isoperimetric hypersurfaces is that they may have singularities in dimensions larger than seven. Thus, the original idea \cite{bray2009penrose} can work without modification only for dimensions less than eight. Yet heuristically if the singular part is small enough, then the method should still work. This is how geometric measure theory comes into play. In Section 3 and 4, we use geometric measure theory to control the size of the singularities  and finish the proof of the theorem in all dimensions. 
	
	In Section 5, we discuss a scalar curvature volume comparison theorem in dimension $3$. The theorem can be regarded as a refined version of Bishop's theorem, since we also incorporate the information about scalar curvature into the volume bound. The only known proof of this scalar curvature volume comparison theorem uses the isoperimetric techniques described in this paper.
	
	\section{Smooth Case of Bishop's Theorem}
	
	This section will give a brief review of isoperimetric surface techniques and Bray's proof of Bishop's theorem in dimensions less than 8. These techniques will fail in higher dimensions since isoperimetric hypersurfaces may exhibit singularities. This issue is addressed in section 3 and 4. In this section, we will only be dealing with manifolds with dimensions less than 8, unless otherwise stated.
	
	\subsection{Isoperimetric Profile Function}
	\begin{definition}
		Let the isoperimetric profile function of $(M, g)$ be
		\[A(V) = \inf_{R}\,\{\textnormal{area}(\partial R)\,\mid\,\vol(R) = V\}\]
		where $R$ is any region in $M$, $\textnormal{area}$ is the $n-1$ dimensional Hausdorff measure and $\vol$ is the $n$ dimensional Hausdorff measure. If there exists a region $R$ that minimizes this quantity, then we say $\Sigma = \partial R$ minimizes area with the given volume constraint. 
	\end{definition}
	
	Note that, in general, minimizers may not exist and when they exist, it may not be unique. However, in the case of Bishop's theorem, we are dealing with manifolds that have $\Ric(g) \geq \Ric_0 > 0$. As mentioned in the introduction, there always exists a minimizer for any given $V\in (0, \textnormal{vol}(M))$. We will only be dealing with manifolds in dimension less than $8$ in this section. So according to lemma 3.1 which we will state later,  the isoperimetric hypersurfaces will always be smooth and have constant mean curvature.
	
	The goal is to use function $A(V)$ to get an upper bound for $\vol(M)$. Notice that $A(V)$ has two roots, 0 and $\vol(M)$. Let $A_M(V)$ be the $A(V)$ function on $(M, g)$ and $A_S(V)$ be the corresponding function on $(S^n, g_0)$ where $S^n$ is the standard sphere with scaling and $\Ric(g) \geq \Ric_0\cdot g$. We want to show that $A_M(V)$ reaches its second root faster than $A_S(V)$. Figure \ref{IPF_2} gives an intuition of how we will prove Bishop's theorem. To that end, we need to understand how ``fast'' the function $A(V)$ curves, that is, the second derivative $A''(V)$. 
	\begin{figure}[tbh]
		\centering
		\includegraphics[width = 0.6\textwidth]{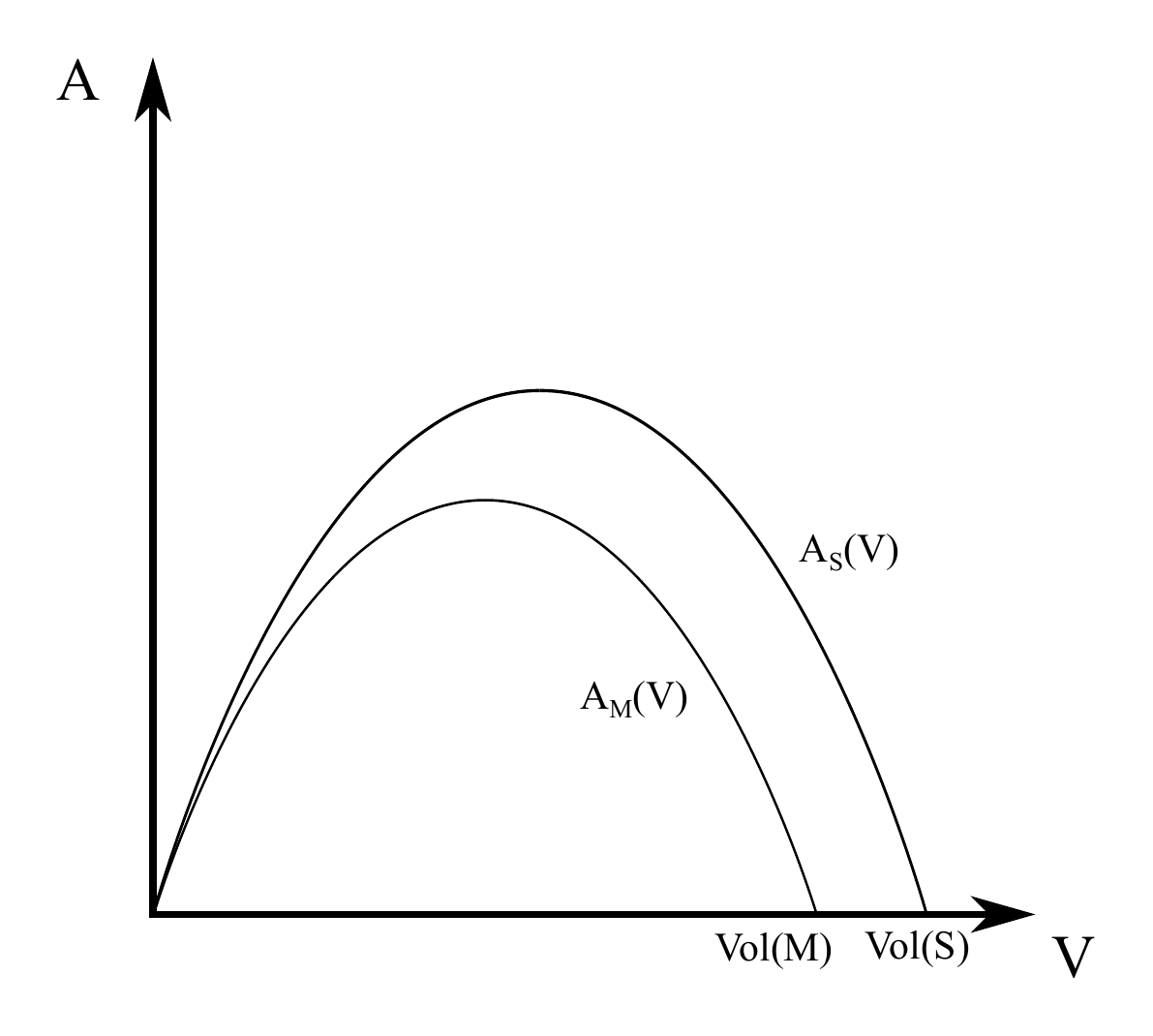}
		\caption{Isoperimetric profile function $A_M(V)$ lies below $A_S(V)$ and hence the root $\mathrm{vol}(M)\leq \mathrm{vol}(S^n)$}
		\label{IPF_2}
	\end{figure}
	Note that, $A''(V)$ may be not well defined. However, we try to establish an inequality for  $A''(V)$:
	\[A''(V)\leq -\frac{1}{A(V)}\left(\frac{1}{n-1}A'(V)^2+\Ric_0\right)\]
	in the sense of comparison function, which means for all $V_0\ge 0$, there exists a smooth function $A_0(V)$, such that $A_0(V)\ge A(V)$, $A_0(V_0)=A(V_0)$, 
	\[A''_0(V_0)\leq -\frac{1}{A_0(V_0)}\left(\frac{1}{n-1}A'_0(V_0)^2+\Ric_0\right).\]
	
	When the isoperimetric hypersurface is smooth, we can do a unit normal variation on $\Sigma(V)$. Fix $V = V_0$ and flow $\Sigma(V_0)$ along the outward-pointing unit normal vector $\nu$ for time $t$. Since $\Sigma(V)$ is smooth, the flow exists for $t\in (-\delta, \delta)$ for some $\delta>0$. 
	
	Let $\Sigma_{V_0}(t)$ be the surface at time $t$, which is the boundary of a region $R(t)$. Let $V = V(t)$ be its volume. With a slight abuse of notation, we parameterize by volume such that $\Sigma_{V_0}(V_0)$  corresponds to $\Sigma_{V_0}(t)$ at time $t = 0$. Denote $A_0(V)=\textnormal{area}(\Sigma_{V_0}(V))$ and denote  $A_{V_0}(t) = \textnormal{area}(\Sigma_{V_0}(t))$, so $A_0(V)$ and $A_{V_0}(t)$ are the same function with different parameters.  Then we have $A(V) \leq A_{0}(V)$ since $\Sigma_{V_0}(V)$ is not a minimizer of function $A(V)$. Hence, 
	\[A''(V_0) \leq A''_0(V_0).\]

	Figure \ref{IPF_1} shows the shapes of $A(V)$ and $A_{0}(V)$ in a neighborhood of $V_0$.
	\begin{figure}[tbh]
		\centering
		\includegraphics[width = 0.7\textwidth]{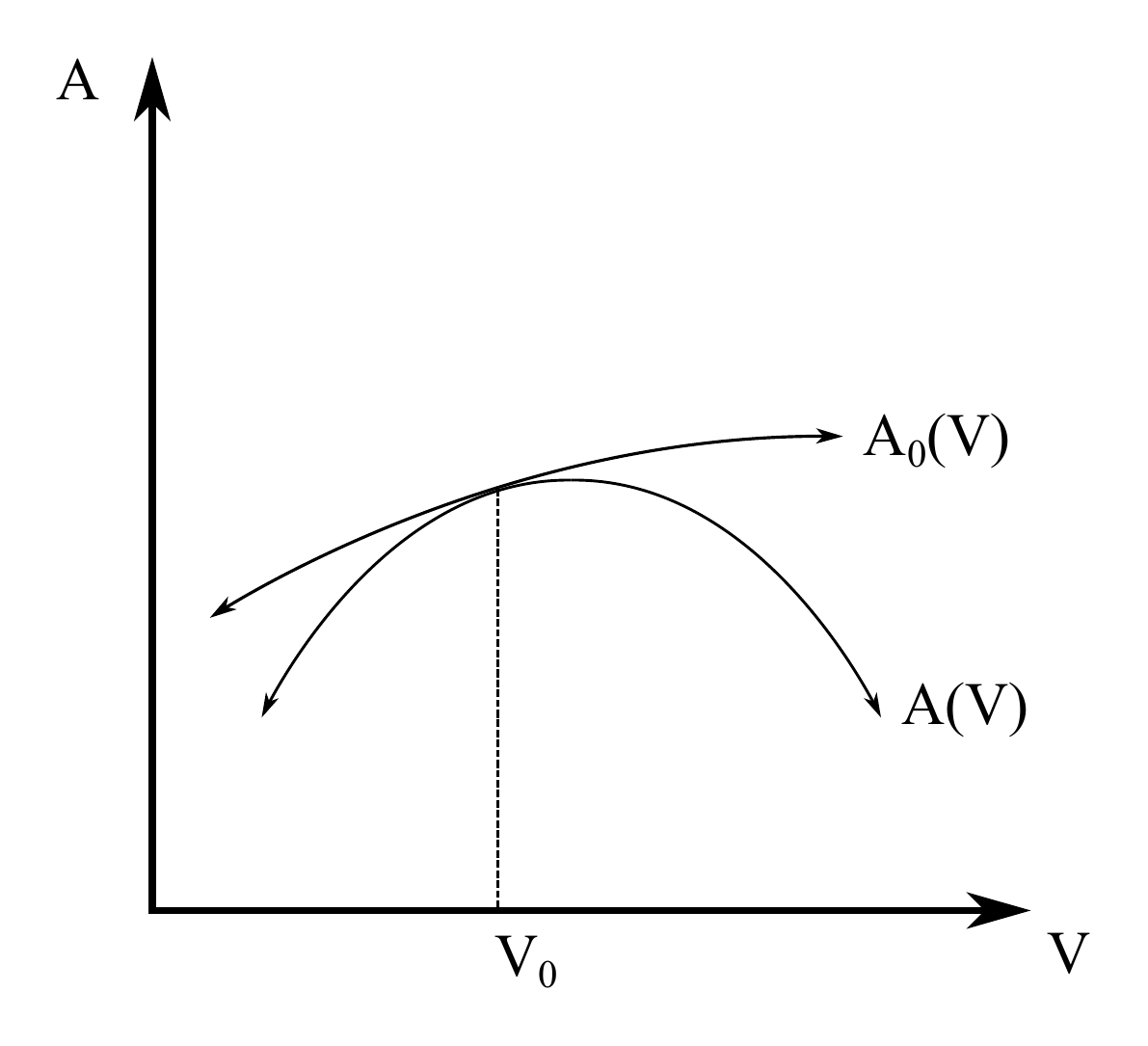}
		\caption{Isoperimetric profile function versus unit normal flow of the isoperimetric hypersurface at a fixed volume $V_0$}
		\label{IPF_1}
	\end{figure}
	
	To get a bound on $A_{0}(V_0)$, we use the unit normal variation. We know that 
	\[A_{V_0}(t) = \int_{\Sigma_{V_0}(t)}\;dA = V'(t)\]
	where $dA$ is the area $(n-1)$-form. By the first variation of volume formula and variation of mean curvature, we obtain that
	\[\dot{dA} = H\;dA\qquad \text{and} \qquad \dot{H} = -\norm{\Pi}^2 - \Ric(\nu, \nu)\]
	where $\Pi$ is the second fundamental form and $H = \mathrm{tr}(\Pi)$ is the mean curvature. Since the mean curvature is constant on a smooth isoperimetric hypersurface, we have
	\[A_{V_0}'(t) = \int_{\Sigma_{V_0}(t)}H\; dA,\]
	\[A_{0}'(V_0) = \left.\frac{A_{V_0}'(t)}{V'(t)}\right|_{t = 0} = H.\]
	
	By simple calculus,
	\[A''_{0}(V_0) = \left.\frac{A''_{V_0}(t)- V''(t)A'_{0}(V_0)}{V'(t)^2}\right|_{t = 0},\]
	\begin{equation}\label{2nd-var}
	A_{V_0}''(t) = \int_{\Sigma_{V_0}(t)}H^2 - \norm{\Pi}^2 - \Ric(\nu,\nu)\;dA,
	\end{equation}
	\[V''(t) = A_{V_0}'(t) = \int_{\Sigma_{V_0}(t)}H\;dA.\]
	
	Putting these together, we have
	\begin{align*}
	A''_0(V_0) &= \frac{1}{A_{0}(V_0)^2}\left(\int_{\Sigma_{V_0}(V_0)} H^2-\norm{\Pi}^2-\Ric(\nu,\nu)\; dA - H\cdot \int_{\Sigma_{V_0}(V_0)}H\;dA \right) \\
	&= \frac{1}{A_{0}(V_0)^2}\int_{\Sigma_{V_0}(V_0)} -\norm{\Pi}^2 - \Ric(\nu, \nu)\; dA.
	\end{align*}
	
	Notice that $\norm{\Pi}^2\geq \frac{1}{n-1}H^2$ and $\Ric(\nu,\nu) \ge \Ric_0$ in the case of Bishop's theorem. Since they are constants on $\Sigma(V_0)$, we can deduce that
	\[A''(V_0) \leq A_{0}''(V_0) \leq -\frac{1}{A_{0}(V_0)}\left(\frac{1}{n-1}A_{0}'(V_0)^2+\Ric_0\right).\]
	
	Now we can vary $V_0$ and obtain that
	
	\begin{equation}\label{ipf bound}
	A''(V)\leq -\frac{1}{A(V)}\left(\frac{1}{n-1}A'(V)^2+\Ric_0\right).
	\end{equation}
	
	\vspace{3pt}
	\subsection{Proof of Bishop's Theorem in Smooth Case} \hfill\vspace{3pt}
	
	The isoperimetric profile function is the key to our proof of Bishop's theorem. Inequality \ref{ipf bound} gives us an upper bound on its second derivative. We will show in later sections that this inequality still holds in higher dimensions. But first, we will prove Bishop's comparison theorem assuming inequality \ref{ipf bound} holds for all dimensions.
	
	\begin{proof}[Proof of Theorem 1]
		
		First, define $F(V) = A(V)^\frac{n}{n-1}$, so that $F(V)$ has the same unit as $V$. Equation \ref{ipf bound} then can be rewritten as,
		
		\begin{equation} \label{F'' bound}
		F''(V) \leq -\frac{n\cdot \Ric_0}{n-1}F(V)^{-\frac{n-2}{n}}. 
		\end{equation}
		
		Observe that the boundary of a region $R$ in $M$ is same as the boundary of its complement $M\setminus R$. Therefore $A(V) = A(\vol(M) - V)$ and the same equation holds for $F(V)$ as well. Then by symmetry and negativity of $F''$, we know that when $V\in [0,\frac{1}{2}\vol(M)]$, $F(V)$ is strictly increasing and $F'(\frac{1}{2}\vol(M)) = 0$.
		
		Now we define the Ricci curvature mass:
		\[m(V) = \left(n^2 {\omega_{n-1}}^{\frac{n}{n-2}} - F'(V)^2\right) - \frac{n^2\cdot \Ric_0}{n-1}F(V)^\frac{2}{n}\]
		where $\omega_{n-1}$ is the volume of the sphere $S^{n-1}$. Take the derivative,  
		\begin{align*}
		m'(V) &= -2F'(V)F''(V) -\frac{2n\cdot \Ric_0}{n-1}F(V)^{-\frac{n-2}{n}}F'(V) \\
		&= -2F'(V)\left(F''(V) + \frac{n\cdot \Ric_0}{n-1}F(V)^{-\frac{n-2}{n}}\right) \geq 0.
		\end{align*}
		
		Since $M$ is a smooth manifold, $F(V)\approx {\omega_{n-1}}^\frac{n}{n-1} \cdot V$ for small $V$ and thus, $F'(0) = n\cdot{\omega_{n-1}}^\frac{1}{n-1}$. Therefore we have $m(0) = 0$. By the nonnegativity of the first derivative, we can further deduce that $m(V)$ is nonnegative.
		
		Let's consider the phase space in the $x$-$y$ plane with $x = F(V)$ and $y = F'(V)$. Let $\gamma$ be the path in the phase space when $V$ goes from 0 to $\frac{1}{2}\vol(M)$. From previous discussions, we know that $F(0) = 0$, $F'(0) = n\cdot {\omega_{n-1}}^{\frac{1}{n-1}} = y_0$ and $F'(\frac{1}{2}\vol(M)) = 0$. So if we set $F(\frac{1}{2}\vol(M)) = x_0$, then $\gamma$ is a path from $(0,y_0)$ to $(x_0,0)$. Further notice that $F(V)$ is strictly increasing and $F'(V)$ is strictly decreasing when $V\in \left[0,\frac{1}{2}\vol(M)\right]$. Therefore we have
		\begin{equation}\label{vol-int}
		\frac{1}{2}\vol(M) = \int_{\gamma}\;dV = \int_{\gamma}\;\frac{dx}{y}. 
		\end{equation}
		
		Consider all paths that terminate at $(x_0,0)$. The path with the smallest $y$ value maximizes the right-hand side of equation \ref{vol-int}. However, if we rewrite inequality \ref{F'' bound} as
		\[y\cdot \frac{dy}{dx} \leq -\frac{n\cdot \Ric_0}{n-1}x^{-\frac{n-2}{n}},\]
		then the path with the smallest $y$ value has equality in the above inequality. This implies that $m'(V) = 0$ and $m(V) = m_0$ is a constant. 
		
		Furthermore, if we rewrite the definition of the mass function, we have
		\[y = \left(n^2{c_{n-1}}^\frac{2}{n} -m_0 - \frac{n^2\cdot \Ric_0}{n-1}x^{\frac{2}{n}}\right)^{\frac{1}{2}}.\]
		
		The value $m_0$ determines the value of $x_0$ is the termination on $x$-axis. With a change of variables, we can compute that 
		\[\sup_{\gamma}\int_\gamma \;\frac{dx}{y} = \sup_{m_0}\left(n^2{c_{n-1}}^\frac{2}{n} - m_0\right)^\frac{n-1}{2}\left(\frac{n-1}{n^2\cdot \Ric_0}\right)^\frac{n}{2}\cdot\int_0^1\left(1-z^\frac{2}{n}\right)^{-\frac{1}{2}}\; dz.\]
		
		So the smaller value of $m_0$ yields larger value of total volume. Since the mass function is nonnegative, $m_0\geq 0$. However, $S^n$ has $m(V) \equiv 0$ because the isoperimetric surfaces are just $n-1$ dimensional spheres. Hence we have
		\[\frac{1}{2}\vol(M) = \int_\gamma \frac{dx}{y} \leq \sup_{\gamma} \int_\gamma \frac{dx}{y} = \frac{1}{2}\vol(S^n)\]
		which completes the proof of Bishop's theorem.
	\end{proof}
	
	\newcommand{\ai}{\alpha}
	\newcommand{\be}{\beta}
	\newcommand{\Ga}{\Gamma}
	\newcommand{\ga}{\gamma}	
	\newcommand{\de}{\delta}
	\newcommand{\De}{\Delta}
	\newcommand{\e}{\epsilon}
	\newcommand{\lam}{\lambda}
	\newcommand{\Lam}{\Lamda}
	\newcommand{\om}{\omega}
	\newcommand{\Om}{\Omega}
	\newcommand{\si}{\sigma}
	\newcommand{\Si}{\Sigma}
	\newcommand{\vp}{\varphi}
	\newcommand{\rh}{\rho}
	\newcommand{\ta}{\theta}
	\newcommand{\Ta}{\Theta}
	\newcommand{\W}{\mathcal{O}}
	\newcommand{\ps}{\psi}
	\newcommand{\mf}[1]{\mathfrak{#1}}
	\newcommand{\ms}[1]{\mathscr{#1}}
	\newcommand{\mb}[1]{\mathbb{#1}}
	\newcommand{\cd}{\cdots}
	\newcommand{\s}{\subset}
	\newcommand{\es}{\varnothing}
	\newcommand{\cp}{^\complement}
	\newcommand{\bu}{\bigcup}
	\newcommand{\ba}{\bigcap}
	\newcommand{\ti}[1]{\tilde{#1}}
	\newcommand{\la}{\langle}
	\newcommand{\ra}{\rangle}
	\newcommand{\ov}[1]{\overline{#1}}
	\newcommand{\no}[1]{\left\lVert#1\right\rVert}
	\newcommand{\du}{^\ast}
	\newcommand{\pf}{_\ast}
	\newcommand{\is}{\cong}
	\newcommand{\n}{\lhd}
	\newcommand{\m}{^{-1}}
	\newcommand{\ts}{\otimes}
	\newcommand{\ip}{\cdot}
	\newcommand{\op}{\oplus}
	\newcommand{\xr}{\xrightarrow}
	\newcommand{\xla}{\xleftarrow}
	\newcommand{\xhl}{\xhookleftarrow}
	\newcommand{\xhr}{\xhookrightarrow}
	\newcommand{\mi}{\mathfrak{m}}
	\newcommand{\wi}{\widehat}
	\newcommand{\sch}{\mathcal{S}}
	\newcommand{\na}{\nabla}
	\newcommand{\N}{\mathbb{N}}
	\newcommand{\R}{\mathbb{R}}
	\newcommand{\Z}{\mathbb{Z}}
	\newcommand{\Q}{\mathbb{Q}}
	\newcommand{\C}{\mathbb{C}}
	\newcommand{\bh}{\mathbb{H}}

	\section{Singular Isoperimetric Hypersurfaces}
	\theoremstyle{plain}
	\newtheorem{lem}[thm]{Lemma}
	\subsection{Regularity and Control on Singular Sets}
	The main complication to using the method of \cite{bray2009penrose} in higher dimensions is that singular isoperimetric hypersurfaces might have singularities. In this section, we estimate the size of small neighborhoods around the singular sets using geometric measure theory. We show that these neighborhoods have small enough area so that carrying out the flow in Section 2 outside these neighborhoods would still give a proof as in the smooth case.
	
	First, we recall the well-known regularity result, such as lemma 3.1 below, regarding isoperimetric hypersurfaces.\begin{lem}
		(Corollary 3.8 in \cite{frankmorgan})
		Let $\Si$ be an $n-1$-dimensional isoperimetric hypersurface in a smooth Riemannian manifold $M$. Then except for a set of Hausdorff dimension at most $n-8,$ $\Si$ is a smooth submanifold of $M$.
	\end{lem}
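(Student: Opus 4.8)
Since the statement is quoted verbatim from \cite{frankmorgan}, the cleanest route is to cite it directly; still, I will indicate the chain of reasoning behind it, both because the almost-minimizing property of isoperimetric regions underlies the variational arguments elsewhere in the paper and because the dimension bound $n-8$ is precisely the phenomenon that Sections 3 and 4 must contend with. The plan has three stages: (i) reduce the volume-constrained problem to an unconstrained almost-minimization problem; (ii) invoke the $\e$-regularity theory for almost minimizers; (iii) run Federer's dimension-reduction argument.

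\emph{Stage (i): isoperimetric regions are $(\Lambda,r_0)$-minimizers of perimeter.} Let $R\s M$ be a region with $\partial R=\Si$ attaining $A(\vol R)$, viewed as a set of finite perimeter. I would show that there are constants $\Lambda,r_0>0$ depending only on $(M,g)$ such that for every competitor $R'$ with $R'\triangle R\Subset B_r(p)$, $r<r_0$, one has $\mathrm{Per}(R)\le \mathrm{Per}(R')+\Lambda\,|R'\triangle R|$. The idea is standard: a modification of $R$ inside a small ball alters the enclosed volume by at most $|R'\triangle R|$, and this discrepancy can be corrected by a normal deformation of $\partial R$ supported in a fixed region where $\Si$ is regular with bounded mean curvature, at a perimeter cost linear in the volume change; minimality of $R$ then yields the displayed inequality.

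\emph{Stage (ii): $\e$-regularity.} $(\Lambda,r_0)$-minimizers satisfy the same interior regularity theory as genuine perimeter minimizers, since the $\Lambda$-term is lower order under blow-up. By the work of Gonzalez--Massari--Tamanini (and De Giorgi in the minimizing case) the reduced boundary $\partial^\ast R$ is a $C^{1,\ai}$ hypersurface; because $\Si$ has constant distributional mean curvature with respect to the smooth metric $g$, elliptic Schauder estimates bootstrap this to $C^\infty$. The monotonicity formula, valid up to the $\Lambda$-error which scales to zero, then shows that at every point of the singular set $\Si_{\mathrm{sing}}:=\mathrm{spt}(\partial R)\setminus\partial^\ast R$ each tangent object is an area-minimizing hypercone in $\R^n$.

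\emph{Stage (iii): dimension reduction.} By Simons' theorem there is no area-minimizing hypercone in $\R^k$ with an isolated singularity at the origin for $k\le 7$, so area-minimizing hypersurfaces in $\R^k$ are smooth for $k\le 7$. Federer's dimension-reduction lemma, applied to iterated tangent cones along $\Si_{\mathrm{sing}}$, then upgrades this to $\dim_{\mathcal H}\Si_{\mathrm{sing}}\le n-8$ (with $\Si_{\mathrm{sing}}$ discrete when $n=8$ and empty when $n\le 7$), which is the assertion. The main obstacle is Stage (i): one must check carefully that an isoperimetric region is a genuine almost-minimizer, i.e., that the volume-correction argument can be made quantitative with a uniform constant $\Lambda$. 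Once this is in place, Stages (ii) and (iii) are the now-classical regularity package recorded in \cite{frankmorgan}, and nothing further is needed.
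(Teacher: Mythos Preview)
Your proposal is correct as a sketch of the standard regularity argument, and in fact you give considerably more than the paper does: the paper offers no proof of this lemma at all, simply citing it as Corollary~3.8 of \cite{frankmorgan} and directing the reader there for both history and proof. So there is no ``paper's own proof'' to compare against; your three-stage outline (almost-minimality via volume correction, $\e$-regularity and tangent-cone analysis, Federer--Simons dimension reduction) is exactly the content of the cited reference, and your identification of Stage~(i) as the place where care is needed matches the emphasis in Morgan's paper.
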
For a detailed discussion of the history of the regularity theorem above and a recent proof, please refer to Morgan's great paper \cite{frankmorgan}. 
	
	Our strategy to deal with the singularities is to control the area of the isoperimetric surface around the singular sets in the following sense.
	\begin{lem}
		For $\Si$ an isoperimetric hypersurface in $M,$ we have the following uniform bound,
		\begin{align*}
		\mathcal{H}^{n-1}(B_\rh(\xi)\cap \Si)\le C\rh^{n-1},
		\end{align*}for some positive constant $C$ depending on only $M$ and $\Si$.
	\end{lem}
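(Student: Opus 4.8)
The plan is to prove the estimate as an Ahlfors-type density upper bound, whose engine is that an isoperimetric region is an \emph{almost} minimizer of perimeter. Write $\Sigma=\partial W$ with $V=\vol(W)\in(0,\vol(M))$, and for an open set $U\subseteq M$ let $P(W;U)$ denote the perimeter of $W$ in $U$, so $P(W;M)=\Area(\Sigma)=A(V)$. Since $\Area(\Sigma)<\infty$, the asserted bound is immediate once $\rho$ is bounded below by a fixed $\rho_0>0$ (take $C\ge\Area(\Sigma)/\rho_0^{\,n-1}$), so it is enough to handle $\rho<\rho_0$; I would fix $\rho_0$ below the injectivity radius of the compact manifold $M$, so that each geodesic ball $B_\rho(\xi)$ with $\rho\le\rho_0$ is a smooth ball satisfying $\vol(B_\rho(\xi))\le C_1\rho^{n}$ and $\mathcal{H}^{n-1}(\partial B_\rho(\xi))\le C_1\rho^{n-1}$ for a constant $C_1=C_1(M)$.

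First I would record the almost-minimality of $W$: there exist $L>0$ and $\eta\in(0,\rho_0]$, depending only on $M$ and $\Sigma$, so that
\[
P(W;M)\ \le\ P(W';M)+L\,\vol(W\triangle W')
\]
for every set of finite perimeter $W'$ with $\vol(W\triangle W')\le\eta$. Indeed, the isoperimetric profile $A$ is locally Lipschitz on $(0,\vol(M))$ --- see e.g.\ the survey \cite{anros}, or derive it directly from a volume-fixing deformation supported near a regular point of $\Sigma$ whose perimeter cost is linear in the volume exchanged --- so with $L$ a Lipschitz constant near $V$ and $\eta$ small one has $P(W;M)=A(V)\le A(\vol(W'))+L\,|\vol(W')-V|\le P(W';M)+L\,\vol(W\triangle W')$, using $A(\vol(W'))\le P(W';M)$ and that $W$ is isoperimetric.

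Next, fix $\xi$ and $\rho<\rho_0$ small enough (depending on $M,\Sigma$) that $C_1\rho^{n}\le\eta$, and apply the previous step to the competitor $W'=W\setminus B_\rho(\xi)$. Inside the open ball $W'$ is empty, while on $\partial B_\rho(\xi)$ its boundary is contained in the geodesic sphere; hence for the full-measure set of radii $\rho$ at which $\partial B_\rho(\xi)$ meets the reduced boundary $\partial^{*}W$ in an $\mathcal{H}^{n-1}$-null set one has $P(W';\overline{B_\rho(\xi)})\le\mathcal{H}^{n-1}(\partial B_\rho(\xi))\le C_1\rho^{n-1}$ and $P(W';M\setminus\overline{B_\rho(\xi)})=P(W;M\setminus\overline{B_\rho(\xi)})$. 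Subtracting this common exterior part from the almost-minimality inequality, and using $\vol(W\triangle W')=\vol(W\cap B_\rho(\xi))\le C_1\rho^{n}$, gives
\[
P\bigl(W;\overline{B_\rho(\xi)}\bigr)\ \le\ C_1\rho^{n-1}+L\,C_1\rho^{n}\ \le\ C'\rho^{n-1}.
\]
Finally, by Lemma 3.1 the singular set of $\Sigma$ has Hausdorff dimension at most $n-8<n-1$, hence $\mathcal{H}^{n-1}$-measure zero, so $\Sigma$ is countably $(n-1)$-rectifiable and the measure $\mathcal{H}^{n-1}$ restricted to $\Sigma$ agrees with the perimeter measure $P(W;\,\cdot\,)$; thus $\mathcal{H}^{n-1}(B_\rho(\xi)\cap\Sigma)\le\mathcal{H}^{n-1}(\overline{B_\rho(\xi)}\cap\Sigma)=P(W;\overline{B_\rho(\xi)})\le C'\rho^{n-1}$ for the admissible radii, and one passes to every $\rho<\rho_0$ using that $\rho\mapsto\mathcal{H}^{n-1}(\overline{B_\rho(\xi)}\cap\Sigma)$ is nondecreasing, at the cost of enlarging $C'$ by a factor $2^{n-1}$.

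The step I expect to be the main obstacle is the almost-minimality in the second paragraph: the clean statement rests on the local Lipschitz control of the isoperimetric profile (equivalently, the existence of a volume-fixing deformation whose perimeter cost is linear in the volume exchanged), and this is where the genuine content lies. Everything else is routine measure theory --- slicing sets of finite perimeter across smooth geodesic spheres at good radii, and the elementary monotonicity of the radial mass $\mathcal{H}^{n-1}(\overline{B_\rho(\xi)}\cap\Sigma)$ in $\rho$.
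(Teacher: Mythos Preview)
Your argument is correct and takes a genuinely different route from the paper. The paper proves Lemma~3.2 by isometrically Nash-embedding $M$ into some $\mathbb{R}^{n+l}$, bounding the Euclidean mean curvature of $\Sigma$ (Lemma~3.4), and then invoking Simon's varifold monotonicity formula (Lemma~3.3), which yields $e^{\Lambda\rho}\rho^{-(n-1)}\mathcal{H}^{n-1}(E_\rho(\xi)\cap\Sigma)$ nondecreasing and hence the desired upper bound. You instead stay intrinsic to $M$: from the local Lipschitz continuity of the isoperimetric profile you extract that $W$ is a $(\Lambda,r_0)$-perimeter almost-minimizer, and then the classical cut-and-paste competitor $W'=W\setminus B_\rho(\xi)$ gives the Ahlfors upper density bound directly. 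Your approach avoids the Nash embedding and the varifold monotonicity machinery and is closer in spirit to the elementary density estimates in, say, Maggi's book; the price is that the substantive input has shifted to the local Lipschitz control of $A(V)$ (equivalently, the volume-fixing deformation with linear perimeter cost), which you rightly flag as the main obstacle. The paper's route, by contrast, packages all the work into the mean curvature bound and monotonicity, which may feel more geometric but requires the extrinsic embedding. Both give a constant depending only on $M$ and $\Sigma$, which is all that is needed in Section~4.
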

	The rest of this section will be dedicated to the proof of Lemma 3.2
	\subsection{Proof of Lemma 3.2}
	The proof is basically a straightforward application of the following monotonicity formula for varifolds in the lecture notes \cite{leonsimon} by Leon Simon.
	\begin{lem}
		(Theorem 17.6 in \cite{leonsimon}) $V$ is an $m$-dimensional varifold in $\R^{m+l}$, with $\mu_V$ associated measure and $H$ generalized mean curvature. Suppose $V$ is contained in an open set $U,$ with Euclidean ball $E_\rh(\xi)\s U$ for some point $\xi\in \Si.$ If $|H|\le \Lambda$, a positive constant, then
		\begin{align*}e^{\Lambda\rh}\rh^{-m}\mu_V(E_\rh(\xi))
		\end{align*}is non-decreasing in $\rh.$
	\end{lem}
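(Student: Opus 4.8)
The plan is to establish the monotonicity of $e^{\Lambda\rho}\rho^{-m}\mu_V(E_\rho(\xi))$ by the classical first-variation argument, testing the stationarity (up to mean curvature) of the varifold against a carefully chosen radial vector field and then reorganizing the resulting identity into an ODE inequality in $\rho$. After translating so that $\xi=0$, I would work with the smooth cutoff $\gamma_\varepsilon(r)$ that equals $1$ for $r\le\rho$, vanishes for $r\ge\rho+\varepsilon$, and interpolates linearly, and test the first-variation identity $\int \operatorname{div}_V X\,d\mu_V = -\int X\cdot H\,d\mu_V$ with the vector field $X(x)=\gamma_\varepsilon(|x|)\,x$. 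The point of this choice is that $\operatorname{div}_V X$ splits into a term $m\,\gamma_\varepsilon(|x|)$ coming from the tangential part of the identity map plus a term $\gamma_\varepsilon'(|x|)\,|x|$ reduced by the length of the \emph{normal} projection of $x$; that normal-projection term is precisely what produces the ``extra'' nonnegative piece in the monotonicity formula.

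Concretely, the key steps, in order, are: (i) compute $\operatorname{div}_V X = m\gamma_\varepsilon(r) + r\gamma_\varepsilon'(r) - |\nabla^\perp r|^2\,r\gamma_\varepsilon'(r)$ where $r=|x|$ and $\nabla^\perp r$ is the component of $\nabla r$ normal to the (generalized) tangent plane; substituting into the first-variation identity and letting $\varepsilon\to 0$ turns the $\gamma_\varepsilon'$ terms into $\rho$-derivatives of $\mu_V(E_\rho)$ and of a second integral $I(\rho)=\int_{E_\rho}|\nabla^\perp r|^2/r^{?}\,d\mu_V$. (ii) Introduce the abbreviation $f(\rho)=\mu_V(E_\rho(0))$ and rearrange to obtain, for a.e.\ $\rho$,
\[
\frac{d}{d\rho}\left(\rho^{-m}f(\rho)\right) \;=\; \frac{d}{d\rho}\int_{E_\rho}\frac{|\nabla^\perp r|^2}{r^m}\,d\mu_V \;-\; \rho^{-m}\int_{E_\rho} x\cdot H\,d\mu_V,
\]
which already gives exact monotonicity when $H\equiv 0$. (iii) Bound the mean-curvature error by $|x\cdot H|\le \Lambda r$ on $E_\rho$, so $\rho^{-m}\int_{E_\rho} x\cdot H\,d\mu_V \ge -\Lambda\rho\cdot\rho^{-m}f(\rho)=-\Lambda\rho^{1-m}f(\rho)$; since the first term on the right is a derivative of a monotone (nondecreasing) quantity, dropping it only strengthens the inequality, yielding $\frac{d}{d\rho}(\rho^{-m}f(\rho)) \ge -\Lambda\rho^{-m}f(\rho)$. (iv) Multiply through by the integrating factor $e^{\Lambda\rho}$: one checks $\frac{d}{d\rho}\left(e^{\Lambda\rho}\rho^{-m}f(\rho)\right) = e^{\Lambda\rho}\left[\frac{d}{d\rho}(\rho^{-m}f(\rho)) + \Lambda\rho^{-m}f(\rho)\right] \ge 0$, which is exactly the claimed statement.

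The main obstacle is step (i) together with the measure-theoretic care needed in step (ii): the identity $\operatorname{div}_V X = m\gamma_\varepsilon + r\gamma_\varepsilon'|\nabla^{T}r|^2$ must be derived using the fact that for a general varifold $\operatorname{div}_V X = \sum_i \langle \nabla_{e_i}X, e_i\rangle$ summed over an orthonormal basis of the approximate tangent plane, and one must justify—via the coarea-type slicing of $\mu_V$ by the function $r$ and Lebesgue differentiation—that the terms involving $\gamma_\varepsilon'$ converge as $\varepsilon\to 0$ to honest $\rho$-derivatives of the relevant integrals for a.e.\ $\rho$, with no concentration of $\mu_V$ on spheres causing trouble. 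The mean-curvature term requires only that $H\in L^1_{\mathrm{loc}}(\mu_V)$ and the pointwise bound $|H|\le\Lambda$, which is given. Once the differential inequality in step (iii) is in hand, steps (iv) is a one-line integrating-factor computation, and monotonicity in $\rho$ follows immediately; the constants are exactly as stated because no manifold-dependent geometry enters—the ambient space here is flat $\R^{m+l}$.
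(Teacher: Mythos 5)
The paper does not prove this lemma at all: it is quoted verbatim as Theorem 17.6 of Simon's lecture notes \cite{leonsimon} and used as a black box, so there is no internal proof to compare against. Your proposal is, in substance, the standard derivation from that source, and it is correct in outline: test the first variation identity $\int \mathrm{div}_V X\,d\mu_V=-\int X\cdot H\,d\mu_V$ with $X=\gamma_\varepsilon(r)\,x$, use $\mathrm{div}_V X=m\gamma_\varepsilon(r)+r\gamma_\varepsilon'(r)\bigl(1-|\nabla^\perp r|^2\bigr)$, pass $\varepsilon\to 0$ via slicing by $r$, and absorb the mean-curvature error with the integrating factor $e^{\Lambda\rho}$. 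One bookkeeping slip: in your step (ii) the mean-curvature term should be $\rho^{-m-1}\int_{E_\rho}x\cdot H\,d\mu_V$, not $\rho^{-m}\int_{E_\rho}x\cdot H\,d\mu_V$ (the identity comes from dividing the first-variation relation $mf(\rho)-\rho f'(\rho)+\dots=-\int_{E_\rho}x\cdot H\,d\mu_V$ by $\rho^{m+1}$). With your written power, the bound $|x\cdot H|\le\Lambda\rho$ would give $\frac{d}{d\rho}(\rho^{-m}f)\ge-\Lambda\rho^{1-m}f$, whose integrating factor is $e^{\Lambda\rho^2/2}$ rather than $e^{\Lambda\rho}$; with the correct power $\rho^{-m-1}$ you get $\frac{d}{d\rho}(\rho^{-m}f)\ge-\Lambda\rho^{-m}f$, which is exactly what step (iv) needs and what your step (iii) in fact (silently) uses. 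The other point requiring care, which you correctly identify, is justifying that $\rho^{-m}\frac{d}{d\rho}\int_{E_\rho}|\nabla^\perp r|^2\,d\mu_V$ equals $\frac{d}{d\rho}\int_{E_\rho}|\nabla^\perp r|^2 r^{-m}\,d\mu_V$ distributionally (the derivative is a measure supported where $r=\rho$) and that this term is nonnegative so it can be discarded. Modulo that exponent correction, the argument is complete and matches the cited theorem.
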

	To use Lemma 3.3, we have to first embed $M$ in some $\R^{n+l}$ as an $n$-dimensional submanifold with induced metric and then get a mean curvature bound on the singular soap bubble $\Si$ with respect to the ambient Euclidean space. Embedding is always possible by Nash embedding theorem. For mean curvature bound, we need the following lemma.
	\begin{lem}
		If $\Si\s M$ is a singular soap bubble, i.e., the mean curvature is constant everywhere. If $M$ is isometrically embedded in $\R^{n+l}$, then $\Si$ has bounded mean curvature in $\R^{n+l}$ as well.
	\end{lem}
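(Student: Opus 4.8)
The plan is to relate the second fundamental form of $\Si$ as a hypersurface of $M$ to its second fundamental form as a submanifold of $\R^{n+l}$ through the tower of isometric embeddings $\Si\s M\s\R^{n+l}$, and then to invoke the compactness of $M$. Write $\bar\na$, $\na^M$, $\na^\Si$ for the Levi--Civita connections of $\R^{n+l}$, $M$, $\Si$; let $\mathrm{II}_M$ be the second fundamental form of $M$ in $\R^{n+l}$ and $\mathrm{II}_\Si$ that of the regular part of $\Si$ in $M$. For $X,Y$ tangent to $\Si$ at a regular point, two applications of the Gauss formula give
\[
\bar\na_X Y \;=\; \na^M_X Y+\mathrm{II}_M(X,Y)\;=\;\na^\Si_X Y+\mathrm{II}_\Si(X,Y)+\mathrm{II}_M(X,Y),
\]
and since $\mathrm{II}_\Si(X,Y)$ lies along the unit normal of $\Si$ in $M$ while $\mathrm{II}_M(X,Y)$ is normal to $M$, the second fundamental form of $\Si$ in $\R^{n+l}$ at such a point equals $\mathrm{II}_\Si+\mathrm{II}_M|_{T\Si}$. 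Tracing over an orthonormal basis $e_1,\dots,e_{n-1}$ of $T_x\Si$ yields
\[
\vec H_\Si^{\R^{n+l}} \;=\; \vec H_\Si^{M}+\sum_{i=1}^{n-1}\mathrm{II}_M(e_i,e_i).
\]

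Next I would bound the two summands separately. Because $\Si$ is a soap bubble, $\vec H_\Si^{M}$ has constant length equal to the constant mean curvature $H$, so $\no{\vec H_\Si^{M}}=|H|$ at every regular point. Since $M$ is compact and $\mathrm{II}_M$ is a continuous tensor on $M$, we have $c_M:=\sup_M\no{\mathrm{II}_M}<\infty$, hence $\big\|\sum_{i=1}^{n-1}\mathrm{II}_M(e_i,e_i)\big\|\le (n-1)c_M$ for every orthonormal frame. Therefore $\no{\vec H_\Si^{\R^{n+l}}}\le |H|+(n-1)c_M=:\Lambda$ at every regular point of $\Si$.

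Finally I would translate this pointwise bound into the varifold statement needed for Lemma 3.3. Since $\Si$ minimizes area among regions of fixed enclosed volume, it is a critical point of $\mathrm{area}-H\cdot\vol$, so the integral varifold $V=\mathbf v(\Si)$ has first variation $\delta V(Y)=-\int_\Si\la\vec H_\Si^{\R^{n+l}},Y\ra\,d\mathcal H^{n-1}$ for all $Y\in C^1_c(\R^{n+l};\R^{n+l})$; because the singular set has Hausdorff dimension at most $n-8$ by Lemma 3.1 it is $\mathcal H^{n-1}$-negligible, so $\vec H_\Si^{\R^{n+l}}$ is the generalized mean curvature of $V$ for $\mu_V$-almost every point and is bounded by $\Lambda$. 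Hence $V$ has $|H|\le\Lambda$ in the sense of Lemma 3.3, which is exactly what is needed to apply the monotonicity formula.

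The computation is a routine double use of the Gauss equation, and the bound on $\mathrm{II}_M$ is immediate from compactness of $M$; the one point that deserves care — not really an obstacle — is the last step, namely checking that the generalized mean curvature of $\mathbf v(\Si)$ in $\R^{n+l}$ agrees $\mu_V$-a.e. with the classically computed $\vec H_\Si^{\R^{n+l}}$ and that no mass of the first variation concentrates on the singular set, which is where the $\mathcal H^{n-1}$-negligibility of the singular set and the first-variation (Lagrange multiplier) characterization of soap bubbles enter.
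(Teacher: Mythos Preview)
Your argument is correct and is essentially the same as the paper's: both compute the mean curvature of $\Si$ in $\R^{n+l}$ via the tower $\Si\s M\s\R^{n+l}$, obtaining $\vec H_\Si^{\R^{n+l}}=\vec H_\Si^{M}+\text{(partial trace of }\mathrm{II}_M\text{)}$, and then bound the extrinsic piece using compactness of $M$; the paper does the same computation in local frames, while you write it invariantly. Your final paragraph, checking that the pointwise bound on the regular part gives a bound on the generalized mean curvature of the varifold, is a point the paper leaves implicit but is exactly what is needed to feed into Lemma~3.3.
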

	\begin{proof}
		Let $\{e_1,\cd,e_n,E_1,\cd,E_l\}$ denote a smooth frame adapted to $M\s \R^{n+l}$ in some neighborhood, with $E_1,\cd,E_l\in T^\perp M.$ By compactness of $M,$ there exists a finite cover $\{U_j\}$ of $M$ so that on each open set $U_j$, such smooth adapted frames exist. Shrinking the neighborhoods $U_j$ if necessary, we have $|\na_{e_j}E_i|\le A$ for some positive constant $A$, all $i,j$ on all neighborhoods $U_k$. Moreover, for every point, let $\{\nu_j\}$ be a frame of $\Si$ adapted to $M,$ with $\nu_n$ the unit normal of $\Si\s M.$ Such pointwise frames exist pointwise except for a codimensional 8 set. Let $H^{M_1\s M_2}$ denote the mean curvature of $M_1$ in $M_2.$ \DeclarePairedDelimiter{\ri}{\la}{\ra}
		We have
		\begin{align*}
		&H^{\Si\s \R^{m+l}}-H^{M\s \R^{m+l}}\\=&-\sum_{j=1}^{n-1}\sum_i\ri{\na_{\nu_j}E_i,\nu_j}E_i-\sum_{j=1}^{n-1}\ri{\na_{\nu_j}\nu_n,\nu_j}\nu_n+\sum_{j=1}^n\sum_i\ri{\na_{\nu_j}E_i,\nu_j}E_i\\
		=&\sum_i\ri{\na_{\nu_n}E_i,\nu_n}E_i+H^{\Si\s M}.
		\end{align*}
		We have
		\begin{align*}
		\left|\sum_i\ri{\na_{\nu_n}E_i,\nu_n}E_i\right|\le& \sum_i \left|\na_{\nu_n} {E_i}\right|\\
		\le&\sum_{j,i} |\ri{\nu_n,e_j}||\na_{e_j}E_i|\\
		\le&\sum_{j,i}|\na_{e_j}E_i|\\
		\le& nlA.
		\end{align*} Since $M$ is compact, $H^{M\s \R^{m+l}}$ is also bounded. Thus, $H^{\Si\s \R^{m+l}}$ is bounded by
		\begin{align*}
		|H|\le \Lambda=\sup\left|H^{M\s \R^{m+l}}\right|+\left|H^{\Si\s M}\right|+nlA,
		\end{align*} except on a codimensional 8 set.
	\end{proof}
	Now, let $\Si$ be a soap bubble, $E_\rh(\xi)$ be the Euclidean $\rh$-ball around $\xi\in \Si$ in $\R^{m+l}$ and $\text{diam}(M)$ be the Euclidean diameter of the embedded $M.$ By Lemma 3.2, we have
	\begin{align*}
	\rh^{-(n-1)}\text{area}(E_\rh(\xi)\cap \Si)\le e^{2\Lambda\cdot\text{diam}(M)}\text{diam}(M)^{-(n-1)}\text{area}(\Si).
	\end{align*}
	Now that $M$ is a Riemannian submanifold of $\R^{m+l},$ the distance on $M$ is larger than the Euclidean distance, so $B_\rh(\xi)\s E_\rh(\xi),$ with $B_\rh(\xi)$ the $\rh$-ball in $M.$ This gives
	\begin{align*}
	&\rh^{-(n-1)}\text{area}(B_\rh(\xi)\cap\Si)\\\le&\rh^{-(n-1)}\text{area}(E_\rh(\xi)\cap \Si)\\\le& e^{2\Lambda\cdot\text{diam}(M)}\text{diam}(M)^{-(n-1)}\text{area}(\Si).
	\end{align*}
	Thus, we can conclude that
	\begin{align*}
	\mathcal{H}^{n-1}(B_\rh(\xi)\cap \Si)\le C\rh^{n-1},
	\end{align*}for some positive constant $C$ depending on only $M,\Si,$ and an embedding of $M$ into Euclidean space.

	\section{Singular case}
	In this section $n\ge 8$, so there may exist a singular set on the isoperimetric surface, hence, we can not define a unit normal vector at the singular set. We choose a cutoff function such that it vanishes at the singular set and equals to 1 outside a small neighborhood of the singular set. Multiplying this cutoff function with the outward unit normal vector, we can construct a geometric flow which fixes the singular set on the isoperimetric surface.
	\begin{thm}
		$A''(V)\le -\frac{1}{A(V)}(\frac{1}{n-1}A'(V)^2+\Ric_0)$, in the sense of comparison function.
	\end{thm}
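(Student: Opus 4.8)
The plan is to run the argument of Section~2 at the fixed volume $V_0$, but with the unit normal variation of the isoperimetric hypersurface $\Si=\partial R_0$ replaced by a variation that freezes a neighborhood of the singular set, and then to let that neighborhood shrink. By Lemma 3.1 the singular set $\Si_{\mathrm{sing}}$ is a compact subset of $\Si$ of Hausdorff dimension at most $n-8$, while the regular part $\Si_{\mathrm{reg}}$ is a smooth hypersurface of constant mean curvature $H_0$ on which $\norm{\Pi}^2\ge\frac{1}{n-1}H_0^2$ and $\Ric(\nu,\nu)\ge\Ric_0$ pointwise.

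The first ingredient is a good family of cutoffs. Fix $\ta\in(0,5)$. Since $\dim_{\mathcal H}\Si_{\mathrm{sing}}\le n-8$, for each small $\de>0$ we may cover $\Si_{\mathrm{sing}}$ by finitely many balls $B_{r_i}(\xi_i)$ with $\xi_i\in\Si_{\mathrm{sing}}$, $\max_i r_i<\de$, and $\sum_i r_i^{n-8+\ta}<\de$, and choose a smooth $\vp_\de\colon\Si\to[0,1]$ vanishing on $\bigcup_i B_{r_i}(\xi_i)$, equal to $1$ off $\bigcup_i B_{2r_i}(\xi_i)$, with $|\na\vp_\de|\le C/r_i$ on $B_{2r_i}(\xi_i)\setminus B_{r_i}(\xi_i)$. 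Lemma 3.2 gives $\mathcal H^{n-1}(B_{2r_i}(\xi_i)\cap\Si)\le Cr_i^{n-1}$, hence
\[\int_\Si|\na\vp_\de|^2\,dA\le C\sum_i r_i^{n-3}\le C\big(\max_i r_i\big)^{5-\ta}\sum_i r_i^{n-8+\ta}<C\de^{6-\ta}\quad\text{and}\quad\mathcal H^{n-1}\big(\{\vp_\de\ne1\}\cap\Si\big)\le C\sum_i r_i^{n-1}<C\de,\]
so both quantities tend to $0$ as $\de\to0$. Moreover $\overline{\{\vp_\de>0\}}$ is a compact subset of $\Si_{\mathrm{reg}}$, so the deformation $\Si_\de(t):=\{\exp_p\!\big(t\,\vp_\de(p)\,\nu_p\big):p\in\Si\}$ is well defined for $|t|$ small, coincides with $\Si$ near $\Si_{\mathrm{sing}}$, and depends smoothly on $t$.

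Next I would repeat the Section~2 computation with $\vp_\de$ in the role of the constant $1$. For $|t|$ small, $\Si_\de(t)=\partial R_\de(t)$ is an admissible competitor enclosing the volume $V_\de(t):=\vol(R_\de(t))$, and its area $\mathcal A_\de(t):=\mathcal H^{n-1}(\Si_\de(t))$ is smooth in $t$. The first variations give $V_\de'(0)=\int_\Si\vp_\de\,dA=:\be_\de$ (positive for $\de$ small) and $\mathcal A_\de'(0)=H_0\be_\de$, and the second variation of area with the cutoff --- which reduces to \eqref{2nd-var} when $\vp_\de\equiv1$ and in general picks up the familiar Dirichlet term $\int_\Si|\na\vp_\de|^2\,dA$ --- gives, after reparametrizing $\mathcal A_\de(t)=A_{0,\de}(V_\de(t))$ and cancelling the $H_0^2$ contributions exactly as in Section~2,
\[A_{0,\de}'(V_0)=H_0,\qquad A_{0,\de}''(V_0)=\frac{1}{\be_\de^2}\int_\Si\Big(|\na\vp_\de|^2-\big(\norm{\Pi}^2+\Ric(\nu,\nu)\big)\vp_\de^2\Big)\,dA.\]
Using $\norm{\Pi}^2\ge\frac{1}{n-1}H_0^2$, $\Ric(\nu,\nu)\ge\Ric_0$, and the Cauchy--Schwarz estimate $\int_\Si\vp_\de^2\,dA\ge\be_\de^2/\mathcal H^{n-1}(\Si)$,
\[A_{0,\de}''(V_0)\le-\frac{1}{\mathcal H^{n-1}(\Si)}\Big(\frac{1}{n-1}H_0^2+\Ric_0\Big)+\frac{1}{\be_\de^2}\int_\Si|\na\vp_\de|^2\,dA.\]
Since each $A_{0,\de}$ is smooth, lies above $A$ on a neighborhood of $V_0$, and satisfies $A_{0,\de}(V_0)=A(V_0)=\mathcal H^{n-1}(\Si)$ and $A_{0,\de}'(V_0)=H_0$ (these being independent of $\de$), while $\be_\de\to\mathcal H^{n-1}(\Si)$ and $\be_\de^{-2}\int_\Si|\na\vp_\de|^2\,dA\to0$ as $\de\to0$, letting $\de\to0$ delivers the inequality $A''(V_0)\le-\frac{1}{A(V_0)}\big(\frac{1}{n-1}A'(V_0)^2+\Ric_0\big)$ in the sense of comparison functions.

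I expect the decisive difficulty to be the cutoff step: one needs cutoffs $\vp_\de$ vanishing near $\Si_{\mathrm{sing}}$ whose Dirichlet energy \emph{and} whose supports' $(n-1)$-area both go to $0$, and this is precisely what the density estimate of Lemma 3.2 together with the codimension-seven bound for $\Si_{\mathrm{sing}}$ inside $\Si$ (Lemma 3.1) make possible --- were $\Si_{\mathrm{sing}}$ only of codimension $\le2$ in $\Si$ no such family would exist and the whole scheme would collapse. Everything else is a bookkeeping variant of the smooth computation of Section~2.
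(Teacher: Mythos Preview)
Your proposal is correct and follows essentially the same approach as the paper: cut off the unit normal variation near the singular set, use Lemma~3.1 to cover $\Si_{\mathrm{sing}}$ efficiently by small balls, use Lemma~3.2 to turn the ball radii into area and Dirichlet-energy bounds on $\Si$, compute the second variation with the cutoff, and let $\de\to0$. The only cosmetic differences are that the paper builds its cutoff as $\min_i\eta_i$ from ambient bump functions and uses the exponent $n-7$ (rather than your $n-8+\ta$), and that it tracks each term as $O(\de^k)$ directly whereas you package the $\int\vp_\de^2$ contribution via Cauchy--Schwarz; neither changes the substance of the argument.
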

	\begin{proof}
		Let $\Sigma_{V_0}$ be the isoperimetric surface with respect to the bounding volume $V_0$, assume $\mathcal{S}$ is the singular set of $\Sigma_{V_0}$, then $\mathcal{S}$ is compact. Assume  $\mathcal{R}=\Sigma_{V_0}-\mathcal{S}$.
		
		According to lemma 3.1, $\mathcal{H}^{n-7}(\mathcal{S})=0$, then for any $\delta>0$, there exist
		$S_\delta=\cup_{i}{B_{r_i}(x_i)}$, such that $r_i\le \delta$, $\mathcal{S}\in S_\delta$, $\sum_i r_i^{n-7}\le 1$. Assume $S'_\delta=\cup_{i}{B_{2r_i}(x_i)}$.
		
		Construct a series of smooth functions $\{\eta_i\}$, such that $\eta_i\equiv 1$ on $M-B_{2r_i}(x_i)$; $\eta_i\equiv 0$ on $B_{r_i}(x_i)$; $\eta_i\in [0,1]$, $|\nabla \eta_i|\le C_0/r_i$, $|\Delta \eta_i|\le C_0/r_i^2$.
		
		Let $\tilde{\eta}=\min\{ \eta_i\}$, $\eta=\tilde{\eta}|_{\Sigma_{V_0}}$.
		As $\eta_i$ are Lipschitz functions, then $\eta$ is Lipschitz, so we can define $\nabla_{\Sigma_{V_0}}\eta$ be the gradient of $\eta$ on $\mathcal{R}$.  
		
		We have:
		\begin{equation*}
		|\nabla_{\Sigma_{V_0}} \eta|^2 \le \sum_i  |\nabla_{\Sigma_{V_0}} \eta_i|^2\le \sum_i|\nabla_M \eta_i|^2\le \sum_i C_0^2 r_i^{-2}, a.e.
		\end{equation*}
		Let $U=\mathcal{R}-S'_{\delta}$. $\eta=0$ on $S_\delta\cap \Sigma_{V_0}$, $\eta=1$ on $U$. 
		
		Let $W_\delta=(S'_\delta-S_\delta)\cap\Sigma_{V_0}$, then according to lemma 3.2, $\exists$ $C\ge 0$, $\mathcal{H}^{n-1}(B_{2r_i}(x_i)\cap \Sigma_{V_0})/r_i^{n-1}\le C$, for all $i$. 
		$W_\delta\subset S'_\delta \cap \Sigma_{V_0}$, then: 
		\begin{align*}
		&\mathcal{H}^{n-1}(W_\delta)\le \mathcal{H}^{n-1}(S'_\delta\cap \Sigma_{V_0})
		\\ \le &\sum_i \mathcal{H}^{n-1}(B_{2r_i}(x_i)\cap \Sigma_{V_0})\le \sum_i Cr_i^{n-1}\le C\delta^6.
		\end{align*}
		
		Let the flow be $\vec{\varphi}=\eta \nu$, $\nu$ is outward normal vector on $\mathcal{R}$. We can extend $\eta$ to a neighbourhood of $\Sigma_{V_0}$, such that $\frac{\partial\eta}{\partial t}=0$ on $\Sigma_{V_0}$.  Similar to the smooth case, we still use the notation $\Sigma_{V_0}(t)$ to denote the surface at time $t$ under the flow $\vec{\varphi}$. We use $A_{V_0}(t)$ to denote the area of $\Sigma_{V_0}(t)$ parameterized by $t$, $A_{0}(V)$ is the area of $\Sigma_{V_0}(t)$ parameterized by $V$. 
		
		Recall the second variation formula for a smooth isoperimetric surface: 	\begin{equation*}
		A''_{V_0}(t)=\int_{\Sigma_{V_0}(t)}(-\Delta_{\Sigma_{V_0} (t)}\eta-\eta\|\Pi\|^2-\eta \Ric(\nu,\nu))\eta+H\frac{\partial \eta}{\partial t}+H^2\eta^2dA.
		\end{equation*}
		It is slightly different from equation (1), since the flow here is not unit speed.
		
		In singular case, as $\eta\equiv1$ on $U$, $\eta\equiv0$ on $\Sigma_{V_0}-U-W_\delta$, $\frac{\partial \eta}{\partial t}=0$ on $\Sigma_{V_0}$, we have
		\begin{align*}
		A''_{V_0}(0) =&\int_{U}(-\|\Pi\|^2- \Ric(\nu,\nu)+H^2)dA
		\\&+ \int_{W_\delta} (|\nabla_{\Sigma_{V_0}}{\eta}|^2-\eta^2\|\Pi\|^2-\eta^2 \Ric(\nu,\nu))+H^2\eta^2dA.
		\end{align*}
		For the $W_\delta$ part estimation, $\exists$ constant $C_1$,
		\begin{align*}
		&\int_{W_\delta} (|\nabla_{\Sigma_{V_0}}{\eta}|^2-\eta^2\|\Pi\|^2-\eta^2 \Ric(\nu,\nu))+H^2\eta^2dA
		\\\le &\int_{W_{\delta}}H^2\eta^2+ \sum_i|\nabla_{\Sigma_{V_0}}\eta_i|^2dA
		\\ \le &H^2 \mathcal{H}^{n-1}(W_\delta)
		+\sum_i\int_{\Sigma_{V_0}\cap(B_{2r_i}(x_i)-B_{r_i}(x_i))}|\nabla_{\Sigma_{V_0}}\eta_i|^2dA
		\\ \le& H^2C \delta^6+ \sum_i C_0^2r_i^{-2}\mathcal{H}^{n-1}(B_{2r_i}(x_i)\cap \Sigma)
		\\ \le & H^2C \delta^6 +\sum_i C_0^2Cr_i^{n-3}\le C_1 \delta^4.
		\end{align*}
		Hence, $\displaystyle{A_{V_0}''(0)\le \int_U (-\|\Pi\|^2-\Ric(\nu,\nu))+H^2 dA+O(\delta^4)}$.
		
		Similar to the smooth case, we have formulas for $A_{V_0}'(0
		)$, $V'(0)$, $V''(0)$, $A''_{0}(V_0)$: 
		\begin{equation*}
		A'_{V_0}(0)=\int_{\Sigma_{V_0}} H\eta dA=\int_U HdA+\int_{W_\delta} H\eta dA= HA_0(V_0)+O(\delta^6).
		\end{equation*}
		\begin{equation*}
		V'(0)=\int_{\Sigma_{V_0}}\eta dA=A_0(V_0)+O(\delta^6).
		\end{equation*}
		\[V''(0)=\left.\frac{d}{dt}\right|_{t=0}\int_{\Sigma_{V_0}(t)}\eta dA=\int_{\Sigma_{V_0}}H\eta^2 dA=HA_0(V_0)+O(\delta^6).\]
		\[A'_0(V_0)=\frac{A'_{V_0}(0)}{V'(0)}=H+O(\delta^6).\]
		\begin{equation*}
		A''_{0}(V_0)=\frac{A''_{V_0}(0)-A'_{0}(V_0)(V''(0))}{V'(0)^2}.
		\end{equation*}
		Let $\delta\rightarrow0$, then we have the same formulas for $A''_{0}(V_0)$ as the smooth case.
		
		Therefore, $A''(V)\le -\frac{1}{A(V)}(\frac{1}{n-1}A'(V)^2+\Ric_0)$.
	\end{proof}
	From here, we can follow the proof of Bishop theorem in section 2 for $n\geq8$.
	\section{Afterward}
	In previous sections, we have proved Bishop's theorem using singular isoperimetric hypersurface. It might seem at first an overkill to prove Bishop's theorem using advanced machinery like geometric measure theory as in our proof, while a simple one using geodesic balls is already well-known. However, our proof of Bishop's theorem serves as a starting point of a grand scheme of isoperimetric surface techniques. We will illustrate the power of isoperimetric surface techniques by presenting the following scalar curvature comparison theorem. In fact the first author first proved the following theorem in \cite{bray2009penrose}, and then discovered the proof of Bishop's theorem in this paper as a byproduct. 	It's remarkable that, as of today, more than twenty years after the first author proved the above theorem, the only known proofs all use isoperimetric surface techniques.
	\begin{thm}
		(Football Theorem)		Let $(S^3,g_0)$ be the constant curvature metric on $S^3$ with scalar curvature $R_0,$ Ricci curvature $\Ric_0\ip g_0$, and volume $V_0.$ There exists a positive constant $\varepsilon_0<1$ so that for any complete smooth Riemannian manifold $(M^3,g)$ of volume $V$ satisfying 
		\begin{align}
		R(g)\ge &R_0,\\
		\Ric(g)\ge \varepsilon_0\ip & \Ric_0\ip g,
		\end{align}we have
		\begin{align*}
		V\le V_0.
		\end{align*}
		Alternatively, if
		\begin{align*}
		R(g)\ge &R_0,\\
		\Ric(g)\ge \varepsilon\ip & \Ric_0\ip g,
		\end{align*}with $\varepsilon>0,$ then 
		\begin{align*}
		V\le \ai(\varepsilon)V_0,
		\end{align*}
		where
		\begin{align*}
		\ai(\varepsilon)=\sup_{\frac{4\pi}{3-2\varepsilon}\le z\le 4\pi}\frac{1}{\pi^2}\begin{pmatrix}
		\int_0^{y(z)}\left(36\pi-27(1-\varepsilon)y(z)^{\frac{2}{3}}-9\varepsilon\ip x^{\frac{2}{3}}\right)^{-\frac{1}{2}}dx\\
		+\int_{y(z)}^{z^{\frac{3}{2}}}\left(36\pi-18(1-\varepsilon)y(z)^{-\frac{1}{3}}-9x^{\frac{2}{3}}\right)^{-\frac{1}{2}}dx
		\end{pmatrix},
		\end{align*}with $$y(z)=\frac{z^{\frac{1}{2}(4\pi-\varepsilon)}}{2(1-\varepsilon)}.$$ Furthermore, the expression of $\ai(\varepsilon)$ is sharp.
	\end{thm}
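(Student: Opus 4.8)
The plan is to carry out the isoperimetric-profile argument of Sections~2 and~4 in dimension $3$ --- where Lemma 3.1 forces the isoperimetric surfaces to be smooth (the singular set would have Hausdorff dimension $\le n-8 = -5$, hence is empty), so no geometric measure theory is needed --- but now feeding \emph{both} curvature hypotheses into the second variation, the scalar bound $R(g)\ge R_0$ entering through the Gauss equation and Gauss--Bonnet. Normalize so that the model $S^3$ is the round unit sphere: $R_0 = 6$, $\Ric_0 = 2$, $V_0 = 2\pi^2$ (so $\tfrac12 V_0 = \pi^2$ matches the prefactor in $\alpha(\varepsilon)$). First comes existence and compactness: $\Ric(g)\ge \varepsilon\,\Ric_0\,g > 0$ gives, via Bonnet--Myers, that $M$ is compact, so an isoperimetric region $\Sigma(V) = \partial R(V)$ exists for every $V\in(0,\vol(M))$; one also reduces (as is standard for isoperimetric regions under a positive Ricci bound) to the case that $\Sigma(V)$ is connected, so that $\int_{\Sigma} K_\Sigma\, dA = 2\pi\chi(\Sigma)\le 4\pi$ by Gauss--Bonnet --- and this $4\pi$ is precisely the new term responsible for the improvement over Bishop.

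Second, the differential inequality. Flowing $\Sigma(V_0)$ along its unit normal, exactly as in Section~2, gives
\[
A_0''(V_0) = \frac{1}{A_0(V_0)^2}\int_{\Sigma_{V_0}}\left(-\norm{\Pi}^2 - \Ric(\nu,\nu)\right)dA,
\]
and in dimension $3$ the scalar Gauss equation reads $2K_\Sigma = R_M - 2\,\Ric(\nu,\nu) + H^2 - \norm{\Pi}^2$. Substituting a $\lambda$-weighted version of this ($\lambda\in[0,1]$ deciding how much of $\Ric(\nu,\nu)$ is traded for scalar and Gaussian curvature), and then using $\norm{\Pi}^2\ge \tfrac12 H^2$, $R_M\ge R_0$, $\Ric(\nu,\nu)\ge \varepsilon\Ric_0$, $\int K_\Sigma\le 4\pi$ and $H=A'$, I obtain a one-parameter family of comparison inequalities for $A(V)$ running from the Bishop-type bound $A''\le -\tfrac1A\big(\tfrac12 (A')^2 + \varepsilon\Ric_0\big)$ at $\lambda=1$ to a scalar-curvature bound of the shape $A''\le -\tfrac34\tfrac{(A')^2}{A} - \tfrac{R_0}{2A} + \tfrac{4\pi}{A^2}$ at $\lambda=0$. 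As in Section~4 these all hold in the comparison-function sense and persist wherever $A$ fails to be $C^2$; the Ricci bound is indispensable, and not only for compactness, since the scalar bound alone gives no volume bound at all.

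Third, the phase-plane analysis. Put $F = A^{3/2}$ (units of volume), so that $F'(0)^2 = 36\pi$ by the Euclidean isoperimetric inequality and $F'(\tfrac12\vol(M)) = 0$ by the symmetry $A(V) = A(\vol(M)-V)$ (and $A$ has no interior critical point before $\tfrac12\vol(M)$, since its second derivative is strictly negative there). With $x = F$, $y = F'$, one has $\tfrac12\vol(M) = \int_0^{x_0} dx/y$ with $x_0 = F(\tfrac12\vol(M))$, so the volume is maximized by the path that turns every inequality above into an equality. In the $F$ variable each such inequality becomes a first-order linear inequality for $y^2$ with integrating factor a power of $x$, hence integrates in closed form; the maximizing path is ``bang-bang,'' using the scalar-curvature bound while $A$ is large and the Bishop-type bound once $A$ falls below a crossover value, which is the parameter $y(z)$ in the statement, with $z = A(\tfrac12\vol(M))$ constrained to $[\tfrac{4\pi}{3-2\varepsilon},4\pi]$ by positivity of $y^2$ on the two pieces and by the turning-point condition $y(x_0)=0$. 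Splitting $\int_0^{x_0}dx/y$ at the crossover yields the two integrals defining $\alpha(\varepsilon)$, and the supremum over $z$ (equivalently over the crossover value) gives $V\le \alpha(\varepsilon)V_0$. For the dichotomous first statement, $\varepsilon_0$ is the threshold at which $\alpha(\varepsilon_0) = 1$; it is $<1$ because at $\varepsilon=1$ one recovers Bishop ($\alpha(1)=1$, realized by the round $S^3$), while for $\varepsilon$ slightly below $1$ the extra $\tfrac{4\pi}{A^2}$ term keeps $\alpha(\varepsilon)\le 1$.

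Finally, sharpness: exhibit the extremal ``football'' --- a rotationally symmetric metric on $S^3$ assembled from two round spherical caps joined along a neck on which the bang-bang extremal profile is realized with equality throughout (constructed directly, or as a limit of smooth metrics) --- whose isoperimetric profile is exactly the extremal solution above and whose volume equals $\alpha(\varepsilon)V_0$. I expect the main obstacle to be the phase-plane optimization with the crossover: identifying the exact switching value, proving the piecewise path is genuinely the maximizer, and matching it with the football --- this is where the precise constants ($27$, $18$, $36\pi$) and the admissible range of $z$ get pinned down, and where the rounding/neck geometry of the extremal must be reconciled with the two-regime structure (including a possible jump of $y=F'$ at the crossover, legitimate for a comparison function). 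Secondary technical points are the comparison-function bookkeeping as $V\to 0$ and $V\to\tfrac12\vol(M)$, and the justification that $\Sigma(V)$ may be taken connected, without which the Gauss--Bonnet $4\pi$ --- and hence the entire improvement over Bishop --- would be lost.
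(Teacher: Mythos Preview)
Your proposal is correct and follows essentially the same route as the paper's sketch: derive the two comparison differential inequalities for $A(V)$ from the Ricci bound and from the scalar bound (the latter via Gauss--Codazzi, connectedness of $\Sigma_{V_0}$, and Gauss--Bonnet giving the $4\pi$ term), then run the phase-plane analysis of Section~2 with $F=A^{3/2}$. Your $\lambda$-weighted family is a cosmetic repackaging --- since the bang-bang optimum lands on the endpoints, you are using exactly the paper's inequalities (9) and (10) --- and your description of the crossover/phase-plane step and the football extremal is in fact more detailed than the paper's sketch, which defers those computations to \cite{bray2009penrose}.
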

	\begin{figure}[H]
	    \centering
	    \includegraphics[width=0.6\linewidth]{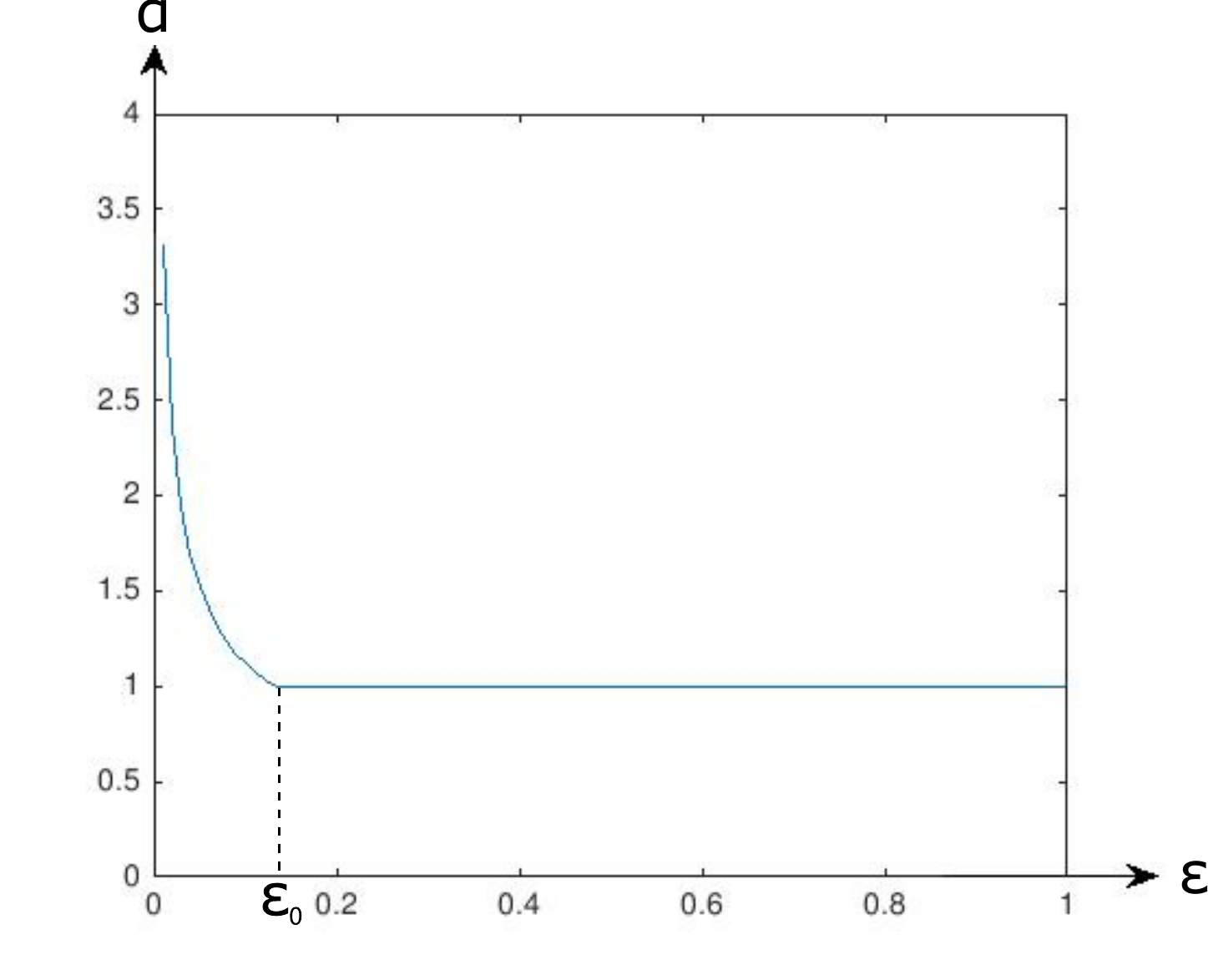}
	    \caption{The graph of $\alpha(\varepsilon)$. When $0<\varepsilon<\varepsilon_0$, then $\alpha(\varepsilon)>1$, it can be achieved by the manifold shown in Figure 5. When $\varepsilon_0\le \varepsilon\le 1$, then $\alpha(\varepsilon)=1$, it can be achieved by the sphere with constant curvature metric.}
	    \label{fig4}
	\end{figure}
	The first part of the theorem can be seen as a normalized version of the second part. The theorem is sharp in the sense both bounds (5) and (6) can almost be achieved. As in the following picture, there exist football-like manifolds with pointy ends, American football-like, to be precise, that achieve equality in the bounds (5) and (6), hence the name Football theorem.
	\begin{figure}[H]
		\centering
		\includegraphics[width=0.6\linewidth]{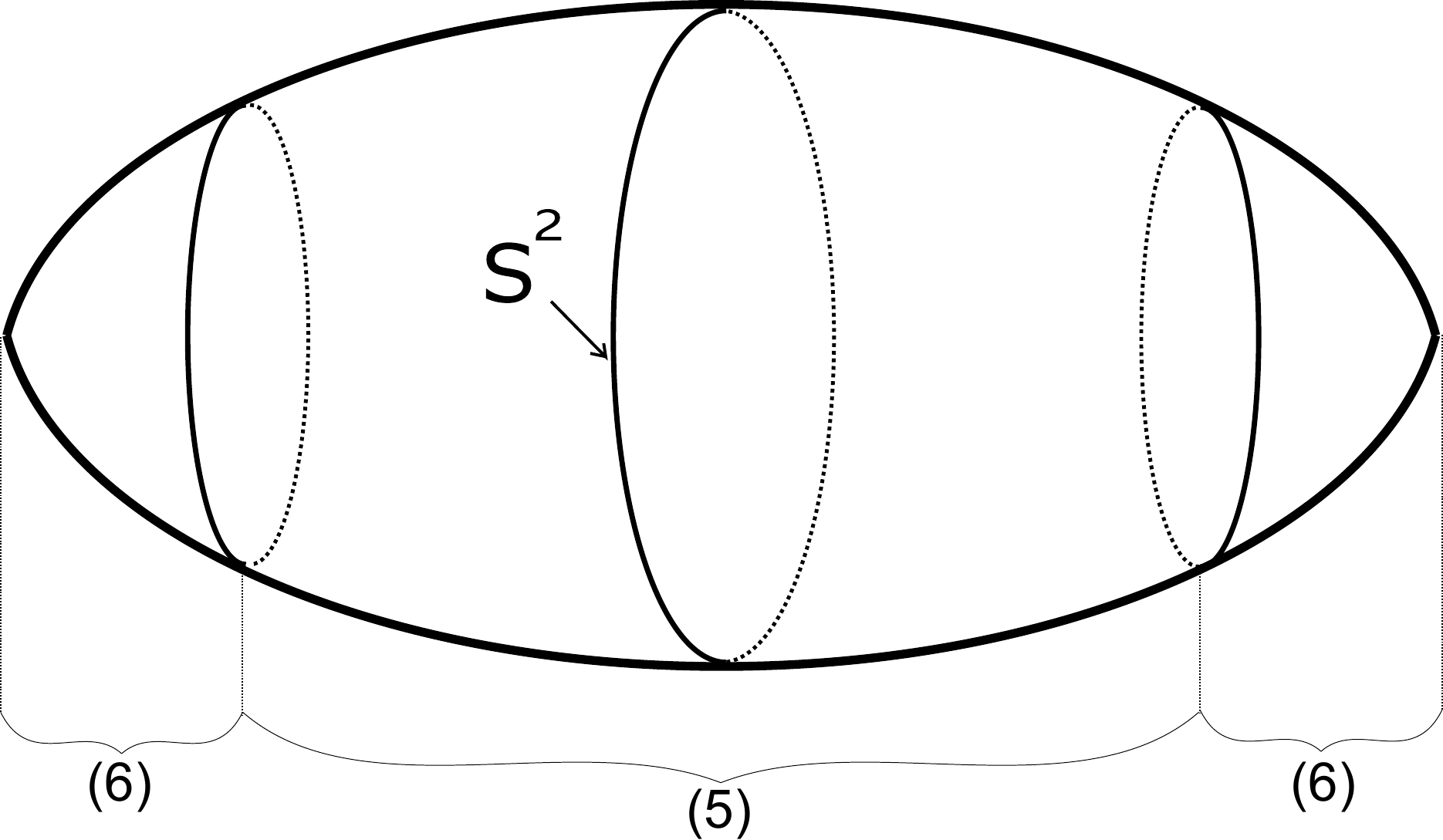}
		\caption{The football in Football Theorem. The portion with label (5) lying below reaches equality in inequality (5), while the portion with label (6) lying below reaches equality in inequality (6).}
		\label{fig}
	\end{figure}
	Moreover, the Ricci curvature lower bound (6) cannot be dispensed with, since there exist counterexamples with positive scalar curvature and arbitrarily large volume. The long cylinder $[0,N]\times S^{n-1},$ with $N>0,$  is such a counterexample. 
	\begin{figure}[H]
		\centering
		\includegraphics[width=0.6\linewidth]{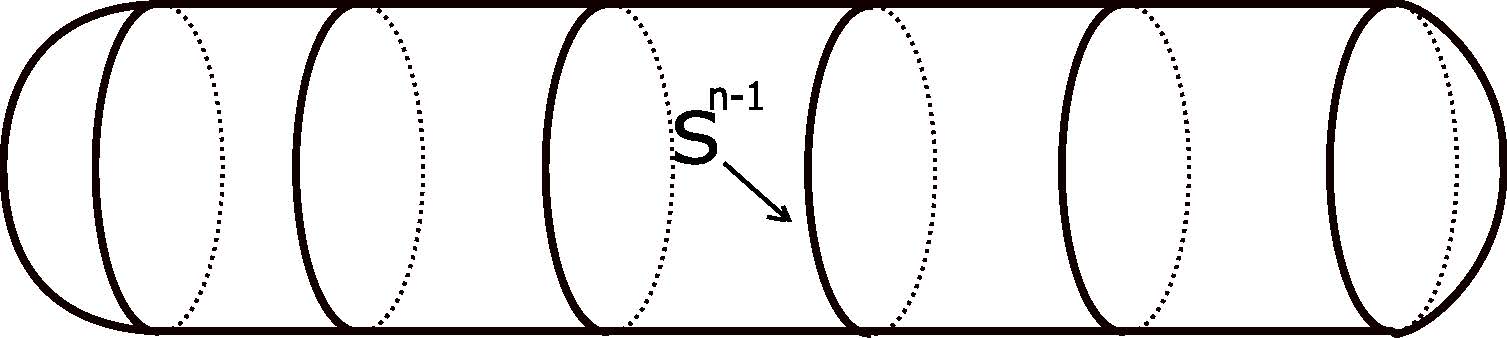}
		\caption{The counterexample when we don't bound Ricci curvature suitably. The cylinder satisfies $\Ric(g)\ge 0.$}
		\label{fig:my_label}
	\end{figure}As illustrated in Figure 6, where every vertical ellipse represents a $S^{n-1}$, the cylinder has the same scalar curvature as the sphere but can have zero Ricci curvature for some pairs of vectors. We can see that the volume of the cylinder has no upper bound since we can make $N$ as large as we want.
	
	Regarding the constant $\varepsilon_0,$ numerical evidence suggests $0.134<\varepsilon_0<0.135.$ Matthew Gursky and Jeff Viaclovsky proved the bound $\varepsilon_0\le\frac{1}{2}$ in \cite{jeffv}.
	
	Also, the dimension $3$ is essential for the proof of theorem, since Gauss-Bonnet and Gauss-Codazzi are both applied to isoperimetric surfaces to utilize the bounds on scalar curvature. However, we believe that the theorem could be extended to high dimensions as in the following conjecture,
	
	\begin{conj}(\cite{bray2009penrose}).
		Let $(S^n,g_0)$ be the constant curvature metric on $S^n$ with scalar curvature $R_0,$ Ricci curvature $\Ric_0\ip g_0$, and volume $V_0.$ There exists a positive constant $\varepsilon_0<1$ so that for any complete smooth Riemannian manifold $(M^n,g)$ of volume $V$ satisfying 
		\begin{align}
		R(g)\ge &R_0,\\
		\Ric(g)\ge \varepsilon_0\ip & \Ric_0\ip g,
		\end{align}we have
		\begin{align*}
		V\le V_0.
		\end{align*}
	\end{conj}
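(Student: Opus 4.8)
\emph{A plan of attack for the Conjecture.} The natural route is to run the isoperimetric argument of Sections 2--4 in dimension $n$, now feeding in \emph{both} curvature hypotheses. The singular set is not the obstruction: Lemma 3.1 still confines it to Hausdorff dimension $n-8$, Lemma 3.2 still gives the density bound $\mathcal H^{n-1}(B_\rho(\xi)\cap\Si)\le C\rho^{n-1}$, and the cutoff $\eta=\min_i\eta_i$ of Section 4 lets us carry out the normal variation $\vec\varphi=\eta\nu$ on the regular part while the error terms localized near the singular set are $O(\delta^4)\to 0$; so we may argue as if $\Si(V)=\partial R(V)$ were smooth with constant mean curvature. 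The Ricci bound $\Ric(g)\ge\varepsilon_0\,\Ric_0\,g$ then yields, by Theorem 4.1 (valid for all $n$), now with $\Ric_0$ replaced by $\varepsilon_0\Ric_0$, the comparison inequality
\[
A''(V)\ \le\ -\frac{1}{A(V)}\!\left(\frac{1}{n-1}A'(V)^2+\varepsilon_0\,\Ric_0\right).
\]
On its own this gives only the non-sharp bound $\vol(M)\le\varepsilon_0^{-n/2}\vol(S^n)$; the content of the Conjecture is to extract a \emph{second}, independent differential inequality from $R(g)\ge R_0$ and couple it with the one above so that the extremal solution of the resulting system is exactly $(S^n,g_0)$.

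That second inequality comes from the Gauss equation. Tracing it for $\Si^{n-1}\subset M^n$ with unit normal $\nu$ gives $R_\Si=R(g)-2\,\Ric(\nu,\nu)+H^2-\norm{\Pi}^2$, where $R_\Si$ is the intrinsic scalar curvature of $\Si$. Substituting $\norm{\Pi}^2$ from this identity into the second-variation integrand of \eqref{2nd-var} and using $R(g)=\sum_{i=1}^{n-1}\Ric(e_i,e_i)+\Ric(\nu,\nu)$ rewrites it as $-\norm{\Pi}^2-\Ric(\nu,\nu)=R_\Si-\sum_{i=1}^{n-1}\Ric(e_i,e_i)-H^2$, so that, applying $\Ric(g)\ge\varepsilon_0\Ric_0\,g$ in the tangential directions and recalling $H=A'(V_0)$,
\[
A''(V_0)\ \le\ \frac{1}{A(V_0)^2}\int_{\Si(V_0)}R_\Si\,dA\;-\;\frac{(n-1)\,\varepsilon_0\,\Ric_0+A'(V_0)^2}{A(V_0)}.
\]
In dimension $3$, $\Si$ is a surface, $R_\Si=2K_\Si$, and Gauss--Bonnet caps $\int_\Si R_\Si=4\pi\chi(\Si)\le 8\pi$; inserting the constant $8\pi$ makes this a genuine second ODE, and $A''(V)$ then obeys the \emph{smaller} of the two bounds at every $V$. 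The extremal solution of the coupled system is precisely the football: it saturates the Bishop-type inequality near the two pointy ends, where $A$ is small, and the Gauss--Bonnet inequality across the waist, the two branches being glued at the crossover, with the waist area left as a free parameter to optimize over --- which is how the numbers $4\pi$, $\tfrac{4\pi}{3-2\varepsilon}$, the transition function $y(z)$, and the supremum defining $\alpha(\varepsilon)$ in Theorem 5.1 arise. The phase-plane/mass-function bookkeeping that converts the coupled inequality into a volume bound is exactly as in the proof of Bishop's theorem.

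Of these ingredients --- the Gauss-equation reformulation of \eqref{2nd-var}, the singular cutoff, and the phase-plane analysis --- only dimension-robust refinements of what already appears in this paper are required (one must, as in Section 4, control the region around the singular set, where $R_\Si$ may degenerate; $\norm{\Pi}$ blows up there at most like the reciprocal distance to $\mathcal S$, which suffices). \textbf{The genuine obstacle is the replacement for the bound $\int_\Si R_\Si\le 8\pi$.} For a closed $(n-1)$-manifold with $n-1\ge 3$ the total scalar curvature is controlled by no topological quantity and can be made arbitrarily large, so Gauss--Bonnet has no higher-dimensional analogue, and without an a priori upper bound on $\int_{\Si(V)}R_\Si\,dA$ the coupled system is underdetermined and the sphere cannot be singled out. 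Every plausible substitute has its own defect: using the stability of $\Si(V)$ together with a Schoen--Yau-type conformal change gives $\Si(V)$ a positive-scalar-curvature metric in its conformal class but loses control of the constants through the first eigenvalue of the conformal Laplacian, and the underlying minimal-hypersurface descent is itself dimension-limited; replacing soap bubbles by $\mu$-bubbles in the style of Gromov and Chodosh--Li does convert scalar lower bounds into geometric control in all dimensions, but is known only to yield width and diameter estimates rather than a sharp volume comparison with the sphere, let alone the delicate threshold $\varepsilon_0$; and iterating the isoperimetric construction inside $\Si(V)$ founders because $\Si(V)$ inherits no a priori lower curvature bound. I expect any proof of the Conjecture to hinge on exactly this point; in hindsight the three-dimensional theorem is a fortunate coincidence of the isoperimetric profile method with the two-dimensional Gauss--Bonnet theorem.
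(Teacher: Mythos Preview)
The statement you were handed is a \emph{conjecture}, not a theorem: the paper explicitly labels it Conjecture~1 and offers no proof, only the remark that ``the dimension $3$ is essential for the proof of [Theorem~5.1], since Gauss--Bonnet and Gauss--Codazzi are both applied to isoperimetric surfaces to utilize the bounds on scalar curvature.'' So there is no proof in the paper to compare your attempt against. Your proposal is appropriately not a proof either but a diagnosis of the obstruction, and on that central point you agree with the paper: the missing ingredient in dimension $n\ge4$ is a replacement for the Gauss--Bonnet bound $\int_{\Si}R_\Si\le 8\pi$ on the isoperimetric hypersurface, and without it the second differential inequality collapses. That is exactly what the paper says in one sentence and what you elaborate at length.

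One technical slip worth flagging: your ``second inequality'' is not the one the paper derives in dimension~$3$, and in fact does not use the hypothesis $R(g)\ge R_0$ at all. You rewrite the second-variation integrand as $-\|\Pi\|^2-\Ric(\nu,\nu)=R_\Si-\sum_{i=1}^{n-1}\Ric(e_i,e_i)-H^2$ and then bound the tangential Ricci sum below by $(n-1)\varepsilon_0\Ric_0$; the scalar curvature lower bound never enters. The paper instead substitutes $\Ric(\nu,\nu)=\tfrac12 R(g)-K+\tfrac12 H^2-\tfrac12\|\Pi\|^2$ into the integrand to obtain $-\tfrac12 R(g)+K-\tfrac12 H^2-\tfrac12\|\Pi\|^2$, then uses $R(g)\ge R_0$ and $\|\Pi\|^2\ge\tfrac12 H^2$ directly, arriving at inequality~(10) with the constant $\tfrac12 R_0$ rather than your $(n-1)\varepsilon_0\Ric_0$. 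If you want your plan to actually couple the scalar hypothesis with the Ricci hypothesis, you should follow the paper's substitution (which generalizes verbatim to all $n$, with $K$ replaced by $\tfrac12 R_\Si$); otherwise your two inequalities are really just two consequences of the Ricci bound alone, and the scalar curvature plays no role.
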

	Now, we will sketch original the proof of Theorem 5.1 in \cite{bray2009penrose} in the following. Alternative exposition can also be found in the survey \cite{brendle} by Simon Brendle.
	\subsection{A Sketch of Proof of Theorem 5.1}
	As we have mentioned, the main ingredient will be isoperimetric surface technique. Using exactly the same reasoning as in Section 2, we can turn the Ricci curvature lower bound (6) into the following ordinary differential inequality
	\begin{align}\label{odi1}
	\boxed{A''{(V)}\le -\frac{1}{A(V)}\left(\frac{1}{2}A'(V)^2+\varepsilon_0\ip \Ric_0\right),}
	\end{align}where all the definitions are the same as in Section 2.
	Now, we are left with the scalar curvature lower bound to deal with. As before
	\begin{align*}
	A_{0}(V_0)^2A''_{0}(V_0)=\int_{\Si_{V_0}}^{}-\no{\Pi}^2-\Ric(\nu,\nu).
	\end{align*}
	By Gauss-Codazzi equations, we get
	\begin{align*}
	\Ric(\nu,\nu)=\frac{1}{2}R(g)-K+\frac{1}{2}H^2-\frac{1}{2}\no{\Pi}^2,
	\end{align*}where $K$ and $H$ are the Gauss curvature and mean curvature of $\Si(V_0)$, respectively. Substituting, we get
	\begin{align*}
	A_{0}(V_0)^2A''_{0}(V_0)=\int_{\Si_{V_0}}-\frac{1}{2}R(g)+K-\frac{1}{2}H^2-\frac{1}{2}\no{\Pi}^2.
	\end{align*}
	To get rid of $K,$ we need some information on the topology of $\Si_{V_0}$. Indeed, $\Si_{V_0}$ has only one connected component, since otherwise we can consider a flow on $\Si_{V_0}$ which is flowing in on one component while flowing out in another. Then all of the surfaces of the family contain the same volume, while by inequality (\ref{odi1}), the second derivative of the area is negative. Thus, $\Si_{V_0}$ doesn't minimize area, which is a contradiction. Hence, by Gauss-Bonnet, we have
	\begin{align*}
	\int_{\Si_{V_0}}K=2\pi\chi(\Si_{V_0})\le 4\pi.
	\end{align*}
	Since $R\ge R_0$ and $\no{\Pi}^2\ge \frac{1}{2}H^2,$ we have
	\begin{align*}
	A_{0}(V_0)^2A''_{0}(V_0)\le& 4\pi-\int_{\Si_{V_0}}(\frac{1}{2}R_0+\frac{3}{4}H^2)\\
	=&4\pi-A_{0}(V_0)\left(
	\frac{1}{2}R_0+\frac{3}{4}H^2\right).
	\end{align*}
	We deduce that
	\begin{align*}
	A''_{0}(V_0)\le \frac{4\pi}{A_{0}(V_0)^2}-\frac{1}{A_{0}(V_0)}\left(\frac{3}{4}A'_{0}(V_0)^2+\frac{1}{2}R_0 \right).
	\end{align*}
	As before, $A(V_0)=A_{0}(V_0)$, and $A(V)\le A_{0}(V)$, so we have
	\begin{align}
	\boxed{A''(V)\le \frac{4\pi}{A(V)^2}-\frac{1}{A(V)}\left(\frac{3}{4}A'(V)^2+\frac{1}{2}R_0\right),}
	\end{align}
	in the sense of comparison functions.
	
	Now that we have the two ordinary differential inequalities (9) and (10), the rest is in the same spirit as Section 2, that is, turning these inequalities into the bounds we want. We will omit the details here. Interested readers can consult the first author's thesis \cite{bray2009penrose}.

\end{document}